\documentclass[11pt, reqno]{amsart}
\usepackage{amsmath}
\usepackage{amssymb}
\usepackage{esint, enumitem}
\usepackage[hidelinks]{hyperref}
\usepackage{color}
\newtheorem{theorem}{Theorem}[section]
\newtheorem{lemma}[theorem]{Lemma}
\newtheorem{proposition}[theorem]{Proposition}

\newtheorem{problem}[theorem]{Problem}

\theoremstyle{definition}

\theoremstyle{remark}
\newtheorem{remark}[theorem]{Remark}

\numberwithin{equation}{section}

\newcommand{\bb}[1]{\mathbb{#1}}
\newcommand{\dd}{\mathrm{d}}

\allowdisplaybreaks[4]

\title[Geometric regularity for the special Lagrangian equation]{Geometric regularity for semi-convex viscosity solutions to the special Lagrangian equation}

\date{\today}

\author{Zhenyu Fan}
\address{School of Mathematical Sciences, Peking University, Beijing, 100871, P. R. China}
\email{fanzhenyu@stu.pku.edu.cn}

\begin{document}

\subjclass[2010]{
                35B65;\,%Smoothness and regularity of solutions to PDEs
                35J60;\,%Nonlinear elliptic equations
                53D12. %Lagrangian submanifolds; Maslov index
                }
\keywords{Regularity, viscosity solutions, the special Lagrangian equation.}

\begin{abstract}
   We prove that the gradient graph of any $W^{2,1}$ semi-convex viscosity solution to the three-dimensional subcritical special Lagrangian equation is area-minimizing and smooth. Furthermore, we prove the smoothness of $W^{2,1}\cap C^{1,1/3+}$ semi-convex viscosity solutions. This yields progress on the conjecture proposed by Wang--Yuan \cite{Wang-Yuan-singular}. The corresponding results in higher dimensions are also established. 
\end{abstract}

\maketitle

\section{Introduction.} 

In this article, we study the regularity of semi-convex viscosity solutions to the special Lagrangian equation
\begin{equation}\label{eq: sLag}
    F(D^2u)=\sum_{i=1}^{n} \arctan\lambda_i(D^2u)=\Theta,
\end{equation}
where $\lambda_i$ are the eigenvalues of $D^2u$ and the phase $\Theta\in (-n\pi/2, n\pi/2)$ is a constant. The fully nonlinear equation above arises from the calibrated geometry of Harvey and Lawson. For a $C^2$ potential $u$ defined on a domain in $\bb{R}^n$, its Lagrangian graph $\Gamma_u=(x,Du(x)) \subset \bb{R}^n \times \bb{R}^n$ is called special if the phase, defined pointwise as the argument of the complex number $(1+\sqrt{-1}\lambda_1)\cdots(1+\sqrt{-1}\lambda_n)$, equals some constant $\Theta$, that is, $u$ satisfies equation \eqref{eq: sLag}. In this case, $\Gamma_u$ is an area-minimizing submanifold in $(\bb{R}^n\times \bb{R}^n , \dd x^2+\dd y^2)$.

The regularity theory for the equation \eqref{eq: sLag} strongly depends on the value of $\Theta$. Yuan \cite{Yuan-06} observed that the level set $\{F(M)=\Theta\}$ is convex if and only if $|\Theta|\ge (n-2)\pi/2$, thus the phase is called \emph{critical} when$|\Theta|=(n-2)\pi/2$, \emph{supercritical} when $|\Theta|>(n-2)\pi/2$ and \emph{subcritical} when $|\Theta|<(n-2)\pi/2$.

For critical and supercritical phases, the regularity theory for viscosity solutions of equation \eqref{eq: sLag} is now well developed. The \emph{a priori} gradient and Hessian estimates were established by Warren--Yuan \cite{Warren-Yuan-grad} and Wang--Yuan \cite{Wang-Yuan-Hess}, respectively. As a result, viscosity solutions to \eqref{eq: sLag} are known to be smooth. This follows from smooth approximation using \cite{CNS-3}, together with the Evans--Krylov--Safonov theory. For variable phases, such \emph{a priori} estimates have also been obtained in the following works \cite{AB-21,AB-24,AB-Mooney-Shankar, ZhouXC, DingQ-Lag, AB-RS-JW}.

The situation becomes much worse in the subcritical range. In dimension three, Nadirashvili and Vl\u{a}du\c{t} \cite{NV-sLag} constructed singular viscosity solutions of class $C^{1,1/3}$ for arbitrary subcritical phases. More generally, for any $m \geq 2$, Wang--Yuan \cite{Wang-Yuan-singular} constructed singular viscosity solutions of class $C^{1,1/(2m-1)}$. It is important to point out that the singularities in these examples are not geometrical. In other words, their gradient graphs are smooth area-minimizing submanifolds as geometric objects. These examples are obtained by starting from a smooth area-minimizing special Lagrangian graph and then rotating it so that vertical tangent directions appear at the origin, thereby producing a singularity for the potential. Since all these examples are at least $C^1$, and none of them has regularity beyond $C^{1,1/3}$, Nadirashvili--Vl\u{a}du\c{t} and Wang--Yuan proposed the following two conjectures:

\begin{problem}[\cite{NV-sLag}]\label{PROB: NV}
Is every viscosity solution to \eqref{eq: sLag} of class $C^1$?
\end{problem}
\begin{problem}[ \cite{Wang-Yuan-singular}]\label{PROB: Wang-Yuan}
    Is every $C^{1, 1/3+}$ viscosity solution to \eqref{eq: sLag} smooth? Can this at least be proved in dimension three?
\end{problem}
There is a clue for Problem \ref{PROB: Wang-Yuan} from the Monge-Amp\`ere equation. Pogorelov \cite{Pogorelov} constructed singular solutions of class $C^{1,1-2/n}$ to $\det D^2u=1$. Later, Caffarelli \cite{Caffarelli-90} and Urbas \cite{Urbas-reg} proved smoothness of $C^{1,1-2/n +}$ viscosity solutions. In dimension three, this threshold is precisely $C^{1,1/3}$.

Problem \ref{PROB: NV} has recently been answered negatively by Mooney and Savin \cite{Mooney-Savin}. They constructed a Lipschitz, semi-convex viscosity solution at a subcritical phase in dimension three. More importantly, the gradient graph of this singular solution is neither smooth nor minimal. Here, at non-differentiable points, the gradient is understood in the sense of the subgradient of a semi-convex function. This shows that the most important geometric property of smooth solutions to equation \eqref{eq: sLag}, namely the minimality of the gradient graph, does not in general pass to viscosity solutions. Later, Mooney and Shankar \cite{Mooney-Shankar} extended the counterexample of Mooney--Savin to arbitrary subcritical phases in all dimensions. From the construction of Mooney--Savin, one can see that the gradient graph of their Lipschitz solution contains a “vertical piece” of positive measure. This is the key obstruction to minimality. This naturally leads to the following question:
\begin{problem}\label{PROB: geo reg}
    Under what conditions on a viscosity solution to \eqref{eq: sLag} can one ensure that its gradient graph is a  smooth area-minimizing Lagrangian submanifold in $\bb{R}^n\times \bb{R}^n$. We call this the geometric regularity problem.
\end{problem}
We expect that a natural condition for minimality may be $C^1$, since the gradient graph of a $C^1$ function has no ``vertical piece''. However, it may still contain points with vertical tangent directions, and the set of such points may have positive measure. Therefore, it is not clear whether the $C^1$ condition is sufficient. From another point of view, by the calibration argument \cite[Theorem 4.2]{Harvey-Lawson}, it is known that the gradient graph of a $W^{2,n}$ strong solution is area-minimizing. It is therefore reasonable to expect that some $W^{2,p}$ condition with $p<n$ should be sufficient.

In this article, we prove several results related to Problems \ref{PROB: Wang-Yuan} and \ref{PROB: geo reg} for semi-convex viscosity solutions. In dimension three, our main result is as follows.

\begin{theorem}\label{thm: 3D main}
    Let $u$ be a $W^{2,1}$ semi-convex viscosity solution to \eqref{eq: sLag} on $B_1\subset \bb{R}^3$, then its gradient graph is a smooth area-minimizing Lagrangian submanifold in $\bb{R}^3\times \bb{R}^3$. If, in addition, $u\in C^{1,\alpha}(B_1)$ for some $\alpha>1/3$, then $u$ is smooth in $B_1$.
\end{theorem}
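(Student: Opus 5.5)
Let $u+\frac{K}{2}|x|^2$ be convex. The plan is to rotate coordinates in $\bb{C}^3$ to turn the gradient graph into the graph of a convex-type potential solving a supercritical (or at least convex-level-set) special Lagrangian equation, where regularity is known. Then we use $W^{2,1}$ to rule out the vertical pieces that spoil minimality in the Mooney--Savin example.

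\textbf{Step 1: Lewy--Yuan rotation.} Since $D^2u\ge -K I$, we rotate $\bb{R}^3\times\bb{R}^3$ by a unitary angle $-\theta$ with $\tan\theta$ slightly larger than $K$. Away from vertical directions, this turns $\Gamma_u$ into the gradient graph $\Gamma_{\bar u}$ of a new potential $\bar u$ on a domain $\bar\Omega$, with Hessian eigenvalues $\tan(\arctan\lambda_i-\theta)$. Because $\lambda_i>-\tan\theta$, the rotated graph is a Lipschitz graph over the $\bar x$-plane, so $\bar u\in C^{1,1}$. The map $x\mapsto \bar x=\cos\theta\, x+\sin\theta\, Du$ is defined through the subgradient, is monotone in the sense that $x+\cot\theta\,\partial u$ is a maximal monotone graph, and has a Lipschitz inverse. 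Viscosity solutions should be preserved under this rotation; I would verify this by comparing with quadratic test functions, which rotate to quadratics. Then $\bar u$ is a $C^{1,1}$ viscosity, hence a.e. strong, solution of $\sum\arctan\bar\lambda_i=\Theta-3\theta$.

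\textbf{Step 2: minimality via $W^{2,1}$.} The obstruction is that $\Gamma_{\bar u}$ as a current may differ from the rotation of the current $[\Gamma_u]$. This happens exactly on the image of the set where $\partial u$ is multivalued or where $D^2u$ is singular, i.e. the "vertical piece". If $u\in W^{2,1}$, then $Du$ is $W^{1,1}$ and the singular part of the distributional Hessian (a measure, since $u$ is semi-convex) vanishes. Consequently the set $\bar x(\{\text{non-twice-differentiable points}\})$ has measure zero in $\bar\Omega$: the area formula for the Lipschitz inverse map together with absolute continuity of $D^2u$ shows that no mass of $\Gamma_{\bar u}$ sits over a null set in $x$. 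Hence $D^2\bar u$ satisfies the equation pointwise a.e. with phase constant, $\Gamma_{\bar u}$ is calibrated by $\mathrm{Re}(e^{-i(\Theta-3\theta)}dz)$ as a Lipschitz graph, and it is area-minimizing. This step is the main obstacle: one must exclude that the equation holds only in the viscosity sense while the a.e. Hessian fails it. For $C^{1,1}$ $\bar u$, viscosity implies a.e. pointwise, which settles this. The real work is showing that the calibrated current $[\Gamma_{\bar u}]$ equals the rotated $[\Gamma_u]$ with multiplicity one, which uses $W^{2,1}$ through the decomposition $D^2u=D^2_{ac}u$.

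\textbf{Step 3: smoothness.} $\Gamma_{\bar u}$ is a Lipschitz area-minimizing graph, hence a minimal graph with bounded slope. Allard/De Giorgi regularity gives $C^{1,\beta}$, and Schauder bootstrapping on the minimal surface system gives smoothness. Rotating back, $\Gamma_u$ is a smooth embedded special Lagrangian submanifold.

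\textbf{Step 4: smoothness of $u$ under $C^{1,\alpha}$, $\alpha>1/3$.} $u$ fails to be smooth only at points where $\Gamma_u$ has a vertical tangent direction. At such a point $p$, $\Gamma_u$ is smooth, so locally it is the rotated graph of a smooth special Lagrangian potential. We expand the smooth minimal surface in a vertical direction $e$: along that direction $x$ varies like $t^{m}$ while $Du$ varies like $t$, with $m\ge3$ odd by the semi-convexity/monotonicity and the structure of the equation (the Wang--Yuan analysis shows the leading order is $2m-1\ge3$). This gives $|Du(x)-Du(p)|\gtrsim|x-p|^{1/3}$ along a curve, contradicting $C^{1,\alpha}$ with $\alpha>1/3$. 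Hence there are no vertical tangents, $D^2u$ is locally bounded, and smoothness follows from the smooth graph. The delicate point is proving that the order of contact is at least cubic, i.e. excluding quadratic order; this uses that a quadratic fold would violate semi-convexity.
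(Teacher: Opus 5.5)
Your Step 1 asserts that the rotated potential $\bar u$ is a full viscosity solution. This is false in general, and it proves too much. A $C^{1,1}$ viscosity solution satisfies the equation a.e., so its gradient graph is calibrated. You would therefore obtain minimality for \emph{every} semi-convex viscosity solution, with no $W^{2,1}$ hypothesis. That contradicts the Mooney--Savin example, whose gradient graph is not minimal. Only the supersolution property survives the rotation (Chen--Shankar--Yuan). The subsolution test breaks down because a paraboloid touching $\bar u$ from above may be forced to have an eigenvalue $\ge\cot\beta$ (writing $\beta$ for your rotation angle). Its back-rotation is then not a graph over the $x$-plane, so it cannot be used to test $u$.

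Step 2 then tries to get the a.e.\ equation from $W^{2,1}$ alone. With $U=\sin\beta\,u+\cos\beta\,|x|^2/2$, you claim that $\partial U$ sends the null set $N$ of non-twice-differentiable points of $u$ onto a null set. But $|\partial U(N)|$ is the Monge--Amp\`ere measure $\mu_3[U](N\times\mathbb{R}^3)$. $W^{2,1}$ only makes $\mu_1[U]=\Delta U\,dx$ absolutely continuous, and that does not control $\mu_3$. For example, $U(x)=|(x_1,x_2)|+|x|^2/2$ is uniformly convex and lies in $W^{2,1}(B_1)$, yet $\partial U$ maps the $x_3$-axis onto a set of positive measure. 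The ``current'' issue you raise is a red herring, since by construction the rotated set $(x,\partial u(x))$ \emph{is} the graph of $D\bar u$. Also, bounded slope requires $\tan\beta<1/K$, not $\tan\beta>K$.

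The missing idea is to split the singular set $\mathcal{S}=\{\det D^2U^*=0\}$ (in $\bar x$-coordinates) by the number of blow-up eigenvalues, and to eliminate the pieces by two different mechanisms.
\begin{itemize}
\item One has $|DU^*(\mathcal{S})|=0$, since twice-differentiability points of $u$ map into the regular set. The Legendre duality of Hessian measures, $\mu_1[U](A\times B)=\mu_2[U^*](B\times A)$, together with $U\in W^{2,1}$, then gives $\int_{\mathcal{S}_2}\sigma_2(D^2U^*)\,d\bar x=0$ on the set $\mathcal{S}_2$ where $D^2U^*$ has rank two. Hence $|\mathcal{S}_2|=0$. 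This is all $W^{2,1}$ can give: in the example above, $D^2U^*$ has rank one on the image of the axis.
\item At a.e.\ remaining point of $\mathcal{S}$, $D^2\bar u$ has two eigenvalues equal to $\cot\beta$, and the third is $\ge-\tan(\arctan K+\beta)$. The pointwise supersolution inequality $F(D^2\bar u)\le\Theta-3\beta$ then forces $\Theta\ge\pi-\arctan K>\pi/2$, which is impossible for a subcritical phase.
\end{itemize}
Your proposal never uses subcriticality, and that is exactly what excludes two blow-up directions.

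Step 3 is also incomplete. De Giorgi's theorem is codimension-one, and Allard's needs density close to one. Lipschitz minimal graphs of higher codimension can be singular: Lawson--Osserman give examples in dimension four, and Bhattacharya--Orriols--Skorobogatova give a singular graphical special Lagrangian cone in dimension five. So you need the flatness of three-dimensional graphical special Lagrangian cones (Fischer-Colbrie, Han--Nadirashvili--Yuan) for blow-ups of $\bar u$, followed by Savin's small perturbation theorem.

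Step 4 is essentially the paper's argument. Convexity of $U^*$ gives $D^2U^*(\bar x_0)e=0$ and $D^3U^*(\bar x_0)[e,e,\cdot]=0$. Hence $|DU^*(\bar x_0+te)-DU^*(\bar x_0)|=O(t^3)$, which is incompatible with $DU\in C^{\alpha}$ for $\alpha>1/3$. Parity and the exact vanishing order play no role.
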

In higher dimensions, we divide the subcritical range into $n-2$ intervals of length $\pi$:
\begin{align*}
    &I_{k}= \left( -(n-2)\frac{\pi}{2}+(k-1)\pi, -(n-2)\frac{\pi}{2}+k\pi \right] \quad \text{for}\ k=1,\dots,n-3,\\
    &\text{and}\qquad I_{n-2}= \left((n-4)\dfrac{\pi}{2}, (n-2)\dfrac{\pi}{2}\right).
\end{align*}
The corresponding higher-dimensional result  is stated as follows.

\begin{theorem}[Minimality of the gradient graph]\label{THM: minimality in high dim}
    Let $u$ be a semi-convex viscosity solution to \eqref{eq: sLag} with phase $\Theta$ on $B_1\subset\bb{R}^n$. Suppose $\Theta\in I_k$ for some $k\in\{1,2,\dots, n-2\}$ and $u\in W^{2,k}(B_1)$. Then its gradient graph is an area-minimizing Lagrangian submanifold in $\bb{R}^n\times \mathbb{R}^n$.
\end{theorem}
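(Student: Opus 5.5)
Because $u$ is semi-convex, we may write $u=v-\frac{K}{2}|x|^2$ with $v$ convex. The gradient graph is then $\Gamma=\{(x,y): y+Kx\in\partial v(x)\}$. Through the linear map $(x,y)\mapsto(x,y+Kx)$ this is the graph of the maximal monotone operator $\partial v$, so $\Gamma$ is a Lipschitz Lagrangian submanifold: it is the image of $\{(x,p):p\in\partial v(x)\}$, which in the rotated coordinates $z=(x+p)/\sqrt2$ is a $1$-Lipschitz graph. This gives a closed rectifiable integral current $T=[\![\Gamma]\!]$ with $\partial T=0$ in $B_1\times\mathbb{R}^n$. Minimality will follow from a calibration argument with the special Lagrangian calibration $\phi=\mathrm{Re}\big(e^{-\sqrt{-1}\Theta}\,dz_1\wedge\cdots\wedge dz_n\big)$, where $z_j=x_j+\sqrt{-1}y_j$. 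Since $\phi$ is closed with comass one, it suffices to show that $\phi$ restricted to the approximate tangent plane of $\Gamma$ equals the volume form $\mathcal{H}^n$-a.e. on $\Gamma$, with the orientation induced from the $x$-projection. Then $\mathbf M(T)=T(\phi)\le \mathbf M(S)$ for any competitor $S$ with $\partial S=\partial T$ (a compactly supported modification), by Stokes.

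Split $\Gamma$ into two parts. The first is $\Gamma_{\rm reg}$, lying over points where $u$ is twice differentiable (Alexandrov, a.e. $x$). The second is the remaining part $\Gamma_{\rm sing}$, which lies over the measure-zero set $E$ of non-twice-differentiable points and includes the "vertical" pieces. On $\Gamma_{\rm reg}$ the tangent plane is the graph of $D^2u(x)$. Viscosity solutions satisfy the equation pointwise at second-order differentiability points, so $\sum\arctan\lambda_i=\Theta$ there, and $\phi$ calibrates the tangent plane with the correct orientation. The whole issue is therefore to show $\mathcal H^n(\Gamma_{\rm sing})=0$, or at least that $\Gamma_{\rm sing}$ carries no mass of $T$. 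In the rotated coordinates $\Gamma$ is a Lipschitz graph over $z$-space, and $\mathcal H^n(\Gamma_{\rm sing})$ is comparable to the Lebesgue measure of $\{x+\partial v(x): x\in E\}$. When $u\in W^{2,n}$ this vanishes by the classical Lusin-type property, which is the Harvey--Lawson case. Here we only have $W^{2,k}$, and we must use the phase restriction $\Theta\in I_k$.

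The key step, and the main obstacle, is to prove that the "vertical" part of $\partial v(E)$ has dimension at most $k-1$ in a suitable sense. The plan is to use the equation through comparison and blow-up at points of $E$. At $x_0\in E$, the set $\partial u(x_0)$ is a convex set of some dimension $m$. Blowing up $u$ at $x_0$ along a homogeneous-degree-one (cone) profile gives a viscosity solution whose Hessian, in the limit, has $m$ eigenvalues equal to $+\infty$. The remaining eigenvalues are bounded below by semi-convexity. Consequently the phase is at least $m\pi/2-(n-m)\pi/2$ in the viscosity sense. Combining this with $\Theta\le -(n-2)\pi/2+k\pi$ gives $m\le k$. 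A sharper analysis of the supersolution side (at the endpoint, a degenerate direction contributes $\pi/2$ while the others are strictly greater than $-\pi/2$) gives $m\le k-1$ on $\Theta\in I_k$, except possibly on the closed right endpoint, which is why $I_k$ is half-open.

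Then we run a coarea/area-formula estimate. Decompose $\Gamma$ over $E$ according to $\dim\partial u(x)=m$. The $W^{2,k}$ hypothesis means the $k\times k$ minors of $D^2u$, including the singular part of the distributional Hessian (which vanishes, since $u\in W^{2,1}$ forces $D^2u$ to have no singular part), are integrable. This controls the $\mathcal H^n$-measure of pieces of $\Gamma$ whose $y$-projection has dimension $\le k$: the Jacobian of the $n$-dimensional area restricted to such pieces is a sum of minors of order at most $k$. Absolute continuity of $D^2v$ then implies that these minors, as measures, give no mass to the null set $E$. Hence $\mathcal H^n(\Gamma_{\rm sing})=0$. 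I expect the delicate points to be making the blow-up dimension bound rigorous for viscosity solutions, and justifying that the mixed minors of the Lipschitz parametrization are dominated by $|D^2u|^k$ on $E$.
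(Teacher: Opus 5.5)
Your outline matches the paper's. You show that the part of $\Gamma$ over the Alexandrov-null set carries no $\mathcal H^n$-mass by pairing two facts: an upper bound on the number of vertical directions, from the equation and $\Theta\in I_k$, and the $W^{2,k}$ hypothesis, which kills pieces with at most $k$ vertical directions. But your first half bounds the wrong quantity. What decides whether a piece of $\Gamma_{\mathrm{sing}}$ carries mass, and which minors control it, is the number $j$ of vertical directions of the approximate tangent plane of $\Gamma$ at $\mathcal H^n$-a.e.\ point. That is a second-order quantity, not $m=\dim\partial u(x)$, and a bound on $m$ says nothing about $j$. For example, take $c$ the Cantor function and $C$ the Cantor set, and set $u(x)=\sum_{i=1}^{k+1}\int_0^{x_i}c(t)\,dt+\frac12\sum_{i>k+1}x_i^2$. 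This $u$ is $C^1$ and convex, so $m=0$ everywhere. Yet over the null set $C^{k+1}\times\mathbb R^{n-k-1}$ its gradient graph has positive $\mathcal H^n$-measure, with $j=k+1$ a.e. So decomposing by $\dim\partial u$ and asserting that the area Jacobian on the resulting pieces involves only minors of order $\le k$ does not follow. Your kink argument does correctly give $m\le k$; it is simplest to touch from below with $\ell+M|P_V(x-x_0)|^2-\tan\theta\,|x-x_0|^2/2$, since the degree-one rescaling does not preserve the equation. Your ``sharper'' bound $m\le k-1$ is false: for $n=3$, $k=1$ it would make every semi-convex subcritical viscosity solution $C^1$, contradicting Mooney--Savin.

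The missing idea is a pointwise second-order viscosity inequality at $\mathcal H^n$-a.e.\ point of $\Gamma_{\mathrm{sing}}$, where $u$ may well be differentiable. Your shear-plus-Minty parametrization cannot supply it, because that map is not unitary and does not carry the equation along. The paper uses the Lewy--Yuan rotation instead:
\begin{itemize}
\item $\Gamma$ becomes the graph of $D\bar u$ with $\bar u\in C^{1,1}$, which is twice differentiable a.e.;
\item $j$ is the multiplicity of the eigenvalue $\tan(\pi/2-\beta)$ of $D^2\bar u$, equivalently of the eigenvalue $0$ of $D^2U^*$;
\item the Chen--Shankar--Yuan preservation of supersolutions gives $F(D^2\bar u(\bar x_0))\le\Theta-n\beta$ at such points, hence $j(\pi/2-\beta)-(n-j)(\theta+\beta)\le\Theta-n\beta$, which forces $j\le k$.
\end{itemize}
Your second half also needs a precise tool. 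Absolute continuity of $D^2v$ alone does not make the higher Hessian measures absolutely continuous: $|x|\in W^{2,1}_{\mathrm{loc}}(\mathbb R^n)$ has Monge--Amp\`ere measure $|B_1|\delta_0$. Also, ``minors dominated by $|D^2u|^k$ on $E$'' has no meaning where $D^2u$ does not exist. The paper uses Colesanti's Steiner-formula Hessian measures, the identity $\mu_j[U]=\sigma_j(D^2U)\,dx$ for $U\in W^{2,j}$, and the Legendre duality $\mu_j[U](DU^*(\mathcal S_{n-j})\times\mathcal S_{n-j})=\int_{\mathcal S_{n-j}}\sigma_{n-j}(D^2U^*)\,d\bar x$. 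The left side vanishes for $1\le j\le k$, so the set $\mathcal S_{n-j}$ of rotated points with exactly $j$ vertical directions is null.
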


\begin{theorem}\label{THM: high dim reg}
    Under the hypotheses of Theorem \ref{THM: minimality in high dim}, assume further that $u$ satisfies the following semi-convexity condition:
    
    $\bullet$ $u$ is generally semi-convex when $n=3$ or $4$, i.e., $u+ \tan \theta |x|^2/2$ is convex for some fixed angle $ \theta \in (0,\pi/2)$;

    $\bullet$ $u$ is $\tan(\pi/6)$-convex when $n\ge 5$, i.e., $u+\tan(\pi/6)|x|^2/2$ is convex.

    Then its gradient graph is a smooth area-minimizing Lagrangian submanifold in $\bb{R}^n\times\bb{R}^n$. If, in addition, $u\in C^{1,\alpha}(B_1)$ for some $\alpha>1/3$, then $u$ is smooth in $B_1$.
\end{theorem}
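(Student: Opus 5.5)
The plan is to start from Theorem \ref{THM: minimality in high dim}: the gradient graph $\Gamma_u$ is an area-minimizing Lagrangian current, in fact a special Lagrangian integral current calibrated by $\mathrm{Re}(e^{-\sqrt{-1}\Theta}\dd z_1\wedge\cdots\wedge\dd z_n)$. The semi-convexity $D^2u\ge -\tan\theta\, I$ then lets us change coordinates by a unitary rotation of $\bb{C}^n$. Rotating by angle $\theta'\in(\theta,\pi/2)$ sends $\Gamma_u$ to the gradient graph of a new potential $\bar u$ whose Hessian eigenvalues satisfy $\bar\lambda_i=\tan(\arctan\lambda_i+\theta')$. Since $\arctan\lambda_i>-\theta$, the new Hessian satisfies $D^2\bar u\ge \tan(\theta'-\theta) I>-\infty$ and is uniformly bounded below. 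The new phase is $\bar\Theta=\Theta+n\theta'$.

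Concretely, I would use the Lewy--Yuan rotation: the map $x\mapsto \cos\theta' x+\sin\theta' Du(x)$ is a bi-Lipschitz homeomorphism onto its image, because $u+\tan\theta|x|^2/2$ is convex. One has to check this carefully at the subgradient level, using monotonicity of the subdifferential of a convex function. The result is that $\bar u$ is a $C^{1,1}$-from-below, semiconvex function whose gradient graph is exactly the rotated $\Gamma_u$. Because the graph is area-minimizing and special Lagrangian with phase $\bar\Theta$, $\bar u$ is a weak solution of the rotated equation. If $\bar\Theta$ lies in the critical or supercritical range, then $\bar u$ is smooth by the Warren--Yuan/Wang--Yuan Hessian estimates (convex level sets together with the Evans--Krylov theory). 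Hence $\Gamma_u$ is smooth.

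The dimension restrictions are designed to make this work. For $n=3,4$, the convexity $\lambda_i>-\tan\theta$ together with $\sum\arctan\lambda_i=\Theta$ leaves room to choose $\theta'$ with $|\bar\Theta|\ge (n-2)\pi/2$, using $\bar\Theta<n\pi/2$. For $n\ge5$, the $\tan(\pi/6)$-convexity guarantees that each rotated angle $\arctan\bar\lambda_i\in(\theta'-\pi/6,\pi/2)$. The rotation must then be chosen so that the rotated Hessian is bounded, which requires $\theta'<\pi/2-\max\arctan\lambda_i$. If that is unavailable globally, I would instead argue by blow-up. Tangent cones of the area-minimizing $\Gamma_u$ are special Lagrangian cones that remain graphs over a rotated plane with the one-sided bound. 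I would show these cones are flat via the convexity/Jacobi-type inequality that holds for $\tan(\pi/6)$-convex special Lagrangian graphs: $\Delta_g b\ge c|\nabla_g b|^2$ for $b=\ln\sqrt{\det(I+D^2u^2)}$, as in Chen--Warren--Yuan. Allard's theorem then gives smoothness. \textbf{The main obstacle} is exactly this step: upgrading the weak (viscosity plus current) solution after rotation to a regular one. This requires knowing that the rotated potential is a viscosity solution of a concave equation, so that the Hessian estimates apply via smooth approximation.

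For the last claim, smoothness of $\Gamma_u$ leaves only one possible singularity of $u$: points where the tangent plane of $\Gamma_u$ is vertical, i.e.\ some $\lambda_i=+\infty$ (semi-convexity excludes $-\infty$). Near such a point $p$, write $\Gamma_u$ as a smooth graph over a rotated plane. The set where vertical directions occur is the zero set of a smooth function $\kappa=\cos(\text{angle})$ on the smooth minimal submanifold. Expanding $u$ near $p$ along a vertical direction shows that the gradient behaves like the inverse of a smooth map with vanishing derivative, namely $y\mapsto y+a y^{m}$ with $m\ge 3$ odd, by the sign constraint from semi-convexity. Thus $Du$ is at best $C^{0,1/(m)}$ in that direction, so $u\notin C^{1,\alpha}$ for $\alpha>1/3$. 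This contradicts the hypothesis. Hence no vertical tangents occur, $D^2u$ is locally bounded, and $u$ is smooth by the regularity of the smooth minimal graph.
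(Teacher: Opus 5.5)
\textbf{There is a genuine gap: your route to smoothness of the gradient graph fails.}

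Your rotation is internally inconsistent. The map $x\mapsto\cos\theta'\,x+\sin\theta'\,Du(x)$ is $DU$ for $U=\sin\theta'\,u+\cos\theta'\,|x|^2/2$. That map realizes the \emph{downward} rotation $\bar\theta_i=\theta_i-\theta'$, and it is monotone only if $\theta'<\pi/2-\theta$. The upward rotation $\bar\theta_i=\theta_i+\theta'$ that you actually use destroys graphicality wherever $\lambda_i\ge\cot\theta'$. This cannot be excluded, since $D^2u$ has no upper bound. Moreover $\tan$ is negative on $(\pi/2,\pi/2+\theta')$, so your bound $D^2\bar u\ge\tan(\theta'-\theta)I$ is false.

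The downward rotation does not rescue the plan. It gives phase $\Theta-n\beta$ with $\beta<\pi/2-\theta$. Reaching $\Theta-n\beta\le-(n-2)\pi/2$ then forces $\theta<(\pi-\Theta)/n$, a quantitative phase-dependent semi-convexity bound that is not assumed. For $n=3,4$ every $\theta\in(0,\pi/2)$ is allowed. For $n\ge5$, $\theta=\pi/6$ suffices only when $\Theta<(6-n)\pi/6$. So in general the rotated equation stays subcritical and non-concave, and neither the Warren--Yuan/Wang--Yuan estimates nor Evans--Krylov apply. For $n=3,4$ your proposal offers no other mechanism.

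The missing ingredient is the blow-up argument you mention only as a fallback, run on the rotated potential with the right rigidity input:
\begin{enumerate}[label=(\roman*)]
\item Rotate downward by $\beta\in(0,\pi/2-\theta)$, with $\beta=\pi/6$ if $n\ge5$. Then $\bar u\in C^{1,1}$ with $-\tan(\theta+\beta)I\le D^2\bar u\le\tan(\pi/2-\beta)I$, so $|D^2\bar u|\le\sqrt3$ when $n\ge5$.
\item The proof of minimality ($|\mathcal S|=0$) makes $\bar u$ a strong, hence viscosity, solution of a uniformly elliptic equation.
\item Blow-ups $k^2\bar u(\bar x_0+\cdot/k)$ converge in $W^{2,p}_{loc}$ to a degree-two homogeneous $C^{1,1}$ solution.
\item This limit is quadratic by Fischer-Colbrie/Han--Nadirashvili--Yuan for $n=3$, by Nadirashvili--Vl\u{a}du\c{t} for $n=4$, and by the Warren--Yuan/Ding--Qi Liouville theorem under $|D^2\bar u|\le\sqrt3$ for $n\ge5$.
\item Savin's small perturbation theorem (or Allard) then gives smoothness.
\end{enumerate}
The dimension split in the statement comes from these cone-rigidity results, not from room in the phase.

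\textbf{The final step has the right idea but is incomplete as sketched.} It is the Bhattacharya--Shankar lemma that the paper uses, but your version only controls the displacement along the degenerate direction. Suppose $e^TD^2U^*(\bar x_0)e=0$ and set $h(t)=DU^*(\bar x_0+te)-DU^*(\bar x_0)$. Monotonicity of $t\mapsto e\cdot DU^*(\bar x_0+te)$ gives $e\cdot h(t)=O(t^3)$. The contradiction comes from applying $DU\in C^{\alpha}$ at the points $DU^*(\bar x_0+te)$ and $DU^*(\bar x_0)$, which gives $|t|\le C|h(t)|^{\alpha}$. This needs $|h(t)|=O(t^3)$ for the whole vector. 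If the transverse components are only $O(t^2)$, you get $|t|\lesssim|t|^{2\alpha}$, which contradicts only $\alpha>1/2$. What closes the gap is $D^2U^*(\bar x_0)e=0$ together with $D^3U^*(\bar x_0)[e,e,\cdot]=0$. The latter holds because $e^TD^2U^*e\ge0$ attains its minimum at $\bar x_0$.

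Two minor corrections: the model map is $y\mapsto ay^m$, not $y+ay^m$; and only $DU^*$ is Lipschitz, not $DU$, so the map is not bi-Lipschitz.
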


\begin{remark}
    It is natural to consider semi-convex solutions, since all the singular solutions mentioned above are semi-convex. Thus, general semi-convexity alone does not give any regularity better than Lipschitz. 
    
    We also note that regularity can be established for convex solutions, or for solutions satisfying a certain quantitative, phase-dependent semi-convexity condition; we refer to Chen--Shankar--Yuan \cite{Chen-Shankar-Yuan} and Mooney--Shankar \cite{Mooney-Shankar}. Moreover, the examples of Mooney--Shankar show that their quantitative, phase-dependent semi-convexity condition is sharp. Therefore, additional assumptions are necessary if one wants to obtain regularity results for general semi-convex solutions.
\end{remark}

\begin{remark}
    Since the examples of Mooney--Savin and Mooney--Shankar have non-smooth and non-minimal gradient graphs, our three-dimensional result shows that these examples cannot belong to $ W^{2,1}$. In particular, $W^{2,1}$ regularity fails in general for viscosity solutions of the subcritical special Lagrangian equation. A recent example constructed by the author, Li and Wang \cite{Fan-Li-Wang} also shows that \emph{a priori} $W^{2,1}$ estimates fail for the subcritical case. We also remark that the $C^{1/(2m-1)}$ singular solutions constructed by Wang--Yuan belong to $W^{2,p}$ for any $p<(2m+1)/(2m-2)$ \cite[Theorem 1.3]{Wang-Yuan-singular}.
\end{remark}

 We now explain the main ideas of the proof. We focus on the three-dimensional case. Our proof follows the framework of Chen--Shankar--Yuan \cite{Chen-Shankar-Yuan}. They proved that the Lewy--Yuan rotation $(x,y)\mapsto (x\cos\beta +y\sin\beta  ,-x\sin\beta+y\cos\beta)$ is valid for general semi-convex functions $u$. After this rotation, the gradient graph $(x,\partial u(x))$ has bounded slope and can be written as the gradient graph $(\bar x, D\bar u (\bar x))$ of a new potential $\bar u\in C^{1,1}$ in the rotated $\bar x$-coordinates. Therefore, to prove the geometric regularity of the gradient graph, it suffices to prove the regularity of the rotated potential $\bar u$.

 A key question is whether the rotated potential $\bar u$ still satisfies equation \eqref{eq: sLag} with the corresponding phase, at least in the viscosity sense. If this is true, then the $C^{1,1}$ regularity of $\bar u$ directly implies the minimality of the gradient graph $(\bar x,D\bar u(\bar x))$. The smoothness of the gradient graph follows from the rigidity of three-dimensional graphical minimal cones \cite{Fischer-Colbrie, Han-Nadi-Yuan} and the minimal surface theory.

 Chen--Shankar--Yuan proved that supersolutions are preserved under the rotation for arbitrary semi-convex solutions. However, the preservation of subsolutions is more subtle. They proved it for convex solutions by mollification, and the concavity of equation \eqref{eq: sLag} on the convex branch is used. Under the phase-dependent semi-convexity assumption of Mooney--Shankar, one can directly verify the preservation of subsolutions when some eigenvalues of $D^2u$ blow up. For general semi-convex solutions, however, this preservation may fail, as shown by the examples of Mooney--Savin and Mooney--Shankar whose gradient graphs are non-minimal.

 In our setting, with the additional $W^{2,1}$ assumption, we do not try to verify the preservation of subsolutions directly. Instead, we show that the rotated potential satisfies the special Lagrangian equation with the corresponding phase almost everywhere. Let $U(x)= \sin\beta \, u(x)+ \cos\beta\, |x|^2/2$ be the uniformly convex potential associated with the Lewy--Yuan rotation, so that the coordinate change is given by $ \bar x=DU(x) $, with inverse $x=DU^*(\bar x)$. In the rotated coordinates, let $\mathcal{R}$ denote the \emph{regular set},  where $D^2U^*$ is nondegenerate; and let $\mathcal{S}$ denote the \emph{singular set}, where $D^2U^*$ is degenerate. At points of $\mathcal{R}$, one can directly check, using the coordinate change , that the rotated potential $\bar u$ and satisfies the corresponding equation. On the other hand, by combining Alexandrov's theorem with Savin's small perturbation theorem \cite{Savin}, we know that $u$ is smooth near almost every point in the original $x$-coordinates. It follows that the preimage $DU^*(\mathcal{S})$ has measure zero. Thus, the main issue is to show that $\mathcal{S}$ itself has measure zero in the rotated coordinates. This is where the $W^{2,1}$ assumption used.  Since $U\in W^{2,1}$, the measure $\Delta U\,\dd x$ has no mass on  the null set $DU^*(\mathcal{S})$. Heuristically, the change of variables gives the duality $\Delta U(x)\,\dd x= \sigma_2(D^2U^*(\bar x)) \,\dd \bar x $. Hence,
    \[  0 = \int_{DU^*(\mathcal{S})}\Delta U(x)\,\dd x= \int_{\mathcal{S}}\sigma_2(D^2U^*(\bar x))\,\dd \bar x.   \]
 If $\mathcal{S}$ has positive measure, this implies that $D^2U^*$ has at least two degenerate eigenvalues in $\mathcal{S}$. Geometrically, these zero eigenvalues correspond, under the Lewy--Yuan rotation, to blow-up eigenvalues of $D^2u$ in the original coordinates. Thus, $D^2u$ would have to blow up in at least two directions at the corresponding point. However, at the subcritical phase, $D^2u$ has at most one blow up eigenvalue. This contradiction shows that $\mathcal{S}$ has measure zero, and hence the geometric regularity follows.

 Finally, we need to pass the regularity of the rotated potential $\bar u$ back to the original solution $u$. In the works of Chen--Shankar--Yuan \cite{Chen-Shankar-Yuan}, Bhattacharya--Shankar \cite{AB-RS-Crelle, AB-RS-ARMA}, and Mooney--Shankar \cite{Mooney-Shankar}, this is usually done by using the strong maximum principle or  constant rank theorems. However, these arguments rely on the convexity or inverse-convexity of the special Lagrangian operator. In our setting, such a structure is not available.
 For this reason, we impose the additional assumption $u\in C^{1,1/3+}$. Then, using the argument of Bhattacharya--Shankar \cite[Theorem 4.1]{AB-RS-ARMA}, we compare the growth of the potential in the coordinates before and after the rotation. This allows us to prove that the coordinate change maps of the Lewy--Yuan rotation are nondegenerate. As a result, the regularity of the rotated potential can be transferred back to the original potential. A related use of the $C^{1,1/3+}$ regularity threshold also appears in the work of Bhattacharya--Ogden \cite{Bhattacharya-Ogden}, which proved regularity for $C^{1,1/3+}$ solutions to the Hamiltonian stationary equation in the supercritical case.

 The proof in higher dimensions is almost the same. The phase condition $\Theta\in I_k$, together with semi-convexity, shows that $D^2u$ can have at most $k$ blow up eigenvalues. On the other hand, by the Hessian duality described above, the $W^{2,k}$ assumption again leads to a contradiction if the singular set has positive measure in the rotated coordinates.
 
 The semi-convexity assumption in Theorem \ref{THM: high dim reg} is used only to ensure the flatness of tangent cones after blowing up the minimal Lagrangian graph. In dimensions $n=3$ or $4$, every graphical special Lagrangian cone is flat, see \cite{Fischer-Colbrie, Han-Nadi-Yuan} for $n=3$ and \cite{NV-4D-sLag} for $n=4$, then general semi-convexity is sufficient. In higher dimensions $n\ge 5$, there is no such rigidity result in general, and Bhattacharya--Orriols--Skorobogatova \cite{B-O-S} recently constructed a nonflat 5-dimensional graphical special Lagrangian cone. Therefore, we need to impose stronger eigenvalue constraints to ensure the rigidity of special Lagrangian cones. The $\tan(\pi/6)$-convexity condition is sufficient for this purpose \cite{Warren-Yuan-Liouville, Ding-Qi}. One may also replace it by the phase-dependent semi-convexity imposed by Ogden and Yuan \cite{Ogden-Yuan}.

\bigskip
\noindent
{\bf Organization.} This paper is organized as follows. In Section \ref{SEC: Pre}, we introduce the Lewy-Yuan rotation and Hessian measures for general semi-convex functions. In Section \ref{SEC: minimality}, we prove the minimality of the gradient graph, namely Theorem \ref{THM: minimality in high dim}. Finally, in Section \ref{SEC: Pf of THMs}, we prove the regularity results in Theorem \ref{thm: 3D main} and Theorem \ref{THM: high dim reg}.

\bigskip
\noindent
{\bf Acknowledgments.}  
The author is grateful to Yu Yuan for bringing this problem to the author's attention, and for his patient guidance and constant encouragement. The author also thanks Ravi Shankar for helpful discussions.

\section{Preliminaries}\label{SEC: Pre}

\subsection{The Lewy-Yuan rotation}\label{Subsec: LY rotation}
For a convex function $u$ defined on $B_1$, the subgradient of $u$ at $x_0\in B_1$ is defined by the set of slopes of all supporting hyperplane of $u$ at $x_0$. At a differentiability point, the subgradient is a singleton, namely the gradient at that point. This notation extends naturally to semi-convex functions by making a shift. More precisely,  let $u$ be a semi-convex function on $B_1\subset \bb{R}^n$ with $u+ \tan\theta\, |x|^2/2$ is convex for some $\theta\in (0,\pi/2)$.  Then we define $\partial u(x_0):= \partial(u+\tan\theta\, |x|^2/2) (x_0) - \tan\theta\, x_0$. In this case, the gradient graph of $u$ is understood as $(x,\partial u(x))$.

The Lewy--Yuan rotation is a rotation of the gradient graph of a semi-convex function that makes the graph have bounded slope in the rotated coordinates. Yuan first introduced this rotation into the study of the special Lagrangian equations and used it to prove a number of rigidity and regularity results \cite{Yuan-Invent, Yuan-06, Wang-Yuan-singular, Chen-Warren-Yuan, Chen-Shankar-Yuan, Ogden-Yuan}. The rotation was first developed for smooth potentials. For lower-regularity potentials, Warren \cite{Warren} and Chen--Warren \cite{Chen-Warren} showed that the rotation can still be carried out for $C^1$ potentials. Later, Chen--Shankar--Yuan \cite{Chen-Shankar-Yuan} extended the rotation to general continuous semi-convex functions using the Legendre transform. We describe this rotation below.

Assume that $u+\tan\theta \,|x|^2/2$ is convex for some $\theta\in(0,\pi/2)$. For any angle $\beta \in (0, \pi/2-\theta)$, we can perform the downward rotation by angle $\beta$ on the gradient graph $(x,\partial u (x))$. Let $c=\cos \beta$ and $s=\sin\beta$. In the rotated coordinate $(\bar x, \bar y)= ( cx+sy, -sx+cy )$, the gradient graph $(x,\partial u(x))$ can be realized as a Lipschitz gradient graph $(\bar x, D\bar u(\bar x))$, where the new  potential is given by
\begin{equation}\label{eq: rotated potential}
    \bar u(\bar x)= \dfrac{c}{s}\dfrac{|\bar x|^2}{2} - \dfrac{1}{s} \left( su+c\dfrac{|x|^2}{2} \right)^*(\bar x),\quad \text{for}\ \bar x\in \partial (su+c|x|^2/2  )(B_1).
\end{equation}
Here the superscript $^*$ denotes the Legendre transform. Since $
U(x)=s u(x)+c|x|^2/2$ is uniformly convex, its Legendre transform is well defined by
\[ U^*(\bar x)=\sup_{x\in B_1}\bigl(\bar x\cdot x-U(x)\bigr),
\quad \bar x\in \partial U(B_1)=:\Omega . \]

We summarize below several properties of this new potential $\bar u$. The proofs can be found in \cite{Chen-Shankar-Yuan}.

(1) The potential $\bar u$ is a $C^{1,1}$ function on the domain $\Omega$ with the Hessian bound:
\[ -\tan(\theta+\beta) I \leq D^2\bar u\leq \tan\left( \dfrac{\pi}{2}-\beta \right). \]

(2) If $u$ is twice differentiable at the point $x_0\in B_1$, then $\bar u$ is also twice differentiable at the corresponding point $\bar x_0= DU(x_0)$. If $ \bar u$ is twice differentiable at the point $\bar x_0$ with $D^2 \bar u< \tan(\pi/2 - \beta)$, then $ u$ is also twice differentiable at $x_0$.
In either case, their Hessians have the relation:
\[ D^2 \bar u(\bar x_0) = (-sI + cD^2u(x_0)) (cI+sD^2u(x_0))^{-1}.  \]
In particular, let $\lambda_i$ and $\bar \lambda_i$ denote the eigenvalues of $D^2u(x_0)$ and $D^2\bar u(\bar x_0)$, respectively. Set $\theta_i=\arctan \lambda_i$ and $\bar\theta_i = \arctan \bar\lambda_i$. We also have
\[ \bar \lambda_i = \dfrac{-s+ c\lambda_i}{c+s\lambda_i}\qquad \text{and} \qquad  \bar\theta_i = \theta_i-\beta. \]

(3) (Preservation of supersolutions, \cite[Proposition 3.2]{Chen-Shankar-Yuan}) If $u$ is a viscosity supersolution of \eqref{eq: sLag} with phase $\Theta$ on $B_1$, then $\bar u$ is a viscosity supersolution of \eqref{eq: sLag} with phase $\Theta-n\beta$ on $\Omega$.

\subsection{Hessian measures}\label{Subsec: Hessian measures}
In this subsection, we introduce Hessian measures for convex functions, with the aim of extending the $\sigma_k$-Hessian operator $\sigma_k(D^2u)$ to functions with low regularity. Here, $\sigma_k(D^2u)$ denotes the $k$-th elementary symmetric function of the eigenvalues of $D^2u$.

There are several ways to define $k$-Hessian measures. For example, Trudinger and Wang \cite{TW-Hess-1, TW-Hess-2, TW-Hess-3} defined $k$-Hessian measures for general $k$-convex functions by smooth approximation. For our purposes, we use the definition introduced by Colesanti \cite{Colesanti--formula,Colesanti-Salani}, in which the Hessian measures arise as the coefficients in a Steiner-type formula for mixed volumes. This definition applies to both convex and semi-convex functions.  The two definitions are equivalent for convex and semi-convex functions. However, the latter is more geometrical and  better suited to our purposes, since it makes the duality of Hessian measures under the Legendre transform more transparent.
 
 Let $u$ be a convex function defined on $B_1$, and let $\Gamma_{u}=\{(x, p): x\in B_1, p\in\partial u(x)\}$ denote its gradient graph. For any Borel set $A\subset \Gamma_u$ and $t>0$, consider the parallel set
\[ P_{t}[u](A):=\{ x+tp: (x,p)\in A \}. \]
The Steiner formula for convex functions \cite[Theorem 1.1]{Colesanti--formula} asserts that there exist nonnegative Borel measures $\mu_{k}[u]$ on $\Gamma_u$ for $k=0,1,\dots, n$, such that
\begin{equation}\label{eq: Steiner}
    \mathcal{H}^n(P_{t}[u](A))= \sum_{k=0}^{n}t^{k}\mu_{k}[u](A).
\end{equation}
The measure $\mu_k[u]$ is called the $k$-Hessian measure of $u$. We also regard $\mu_{k}[u]$ as a measure on $B_1\times \bb{R}^n$ by setting  $\mu_{k}[u](A)= \mu_k[u](A\cap \Gamma_u)$ for any Borel set $A\subset B_1\times \bb{R}^n$. By \cite[Theorem 1.1]{Colesanti-Hug-MM}, these Hessian measures are weakly continuous under locally uniform convergence of convex functions.

When $u\in C^2$, the $k$-Hessian measures defined in this way recover the classical measures $\sigma_{k}(D^2u)\,\dd x$. Indeed, for any Borel set $E\subset B_1$,  the area formula gives
\begin{align*}
    \mathcal{H}^n(P_{t}[u](E\times\bb{R}^n)) = \int_{E} \det (I+tD^2u(x))\,\mathrm{d}x=\sum_{k=0}^{n} t^{k}\int_{E}\sigma_k(D^2u)\,\dd x.
\end{align*}
Comparing this with the Steiner formula \eqref{eq: Steiner} yields
\[
\mu_k[u](E\times \mathbb R^n)
=
\int_E \sigma_k(D^2u)\,\mathrm dx.
\]
By mollification, one can also show that the lower-order Hessian measures of a $W^{2,p}$ convex function also recover the corresponding classical measures.
\begin{proposition}
    Let $u\in W^{2,p}(B_1)$ be a convex function defined on $B_1$ for some $p\ge 1$. Then for $k=1,\dots, [p] $, we have
    \[
    \mu_k[u](E\times \mathbb R^n)
    =
    \int_E \sigma_k(D^2u)\,\mathrm dx\quad \text{for any Borel set}\ E\subset B_1
\]
\end{proposition}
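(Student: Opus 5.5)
We argue by mollification, combined with the weak continuity of Hessian measures \cite[Theorem 1.1]{Colesanti-Hug-MM}. Fix $k\le [p]$, so $k\le p$. Let $u_\varepsilon=u*\rho_\varepsilon$ on $B_{1-\varepsilon}$. Then $u_\varepsilon$ is smooth and convex, $u_\varepsilon\to u$ locally uniformly, and $D^2u_\varepsilon\to D^2u$ in $L^p_{loc}$. By the $C^2$ computation above, $\mu_k[u_\varepsilon](E\times\bb{R}^n)=\int_E\sigma_k(D^2u_\varepsilon)\,\dd x$. Weak continuity gives $\mu_k[u_\varepsilon]\rightharpoonup\mu_k[u]$ as measures on $B_r\times\bb{R}^n$ for each $r<1$. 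Since $u$ is locally Lipschitz, the gradient graphs over $\overline{B_r}$ lie in a fixed compact set $\overline{B_r}\times \overline{B_R}$ uniformly in $\varepsilon$. Hence for $\phi\in C_c(B_r)$ we may test against $\phi(x)\chi(p)$ with $\chi\equiv1$ on $B_R$, and obtain
\[
\int\phi\,\dd\mu_k[u](\cdot\times\bb{R}^n)=\lim_{\varepsilon\to0}\int\phi\,\sigma_k(D^2u_\varepsilon)\,\dd x .
\]
In other words, the $x$-marginals $\nu_k^\varepsilon:=\sigma_k(D^2u_\varepsilon)\,\dd x$ converge weakly to the marginal $\nu_k$ of $\mu_k[u]$.

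Next we identify the limit. The function $\sigma_k(M)$ is a homogeneous polynomial of degree $k$ in the entries of $M$, so $|\sigma_k(A)-\sigma_k(B)|\le C|A-B|(|A|+|B|)^{k-1}$. Hölder's inequality with exponents $p$ and $p/(k-1)$ (valid since $k\le p$), together with local boundedness of $D^2u_\varepsilon$ in $L^p$, gives $\sigma_k(D^2u_\varepsilon)\to\sigma_k(D^2u)$ in $L^1_{loc}$, where $D^2u$ is the a.e. (weak) Hessian. Therefore $\int\phi\,\dd\nu_k=\int\phi\,\sigma_k(D^2u)\,\dd x$ for all $\phi\in C_c(B_1)$. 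Two locally finite Radon measures on $B_1$ that agree on $C_c(B_1)$ coincide, so $\mu_k[u](E\times\bb{R}^n)=\int_E\sigma_k(D^2u)\,\dd x$ for every Borel $E\subset B_1$. Note that $\sigma_k(D^2u)\ge0$, because $D^2u\ge0$ a.e. by convexity.

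The main obstacle is the passage to the limit in the first step. Weak convergence is only given on $B_1\times\bb{R}^n$, so we must ensure that no mass escapes in the $p$-direction; the uniform local Lipschitz bound handles this, since it confines all the supports to one compact set. We must also make sure that weak convergence of the marginals is enough to determine the measure of arbitrary Borel sets, rather than only sets whose boundary carries no mass. This is settled by identifying $\nu_k$ as a measure via the Riesz uniqueness argument, instead of passing to the limit set by set. Finally, the restriction $k\le[p]$ is exactly what makes $\sigma_k(D^2u)\in L^1_{loc}$ and yields the $L^1$ convergence. For $k>p$, the limit measure could carry a singular part not captured by the pointwise Hessian.
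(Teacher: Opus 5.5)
Your proposal is correct and takes the same route as the paper. The paper's own justification is only the one-line remark that the identity follows ``by mollification,'' resting on the weak continuity of Hessian measures from Colesanti--Hug that it cites just before. Your argument supplies exactly the missing details: uniform local gradient bounds so you can test against $\phi(x)\chi(p)$, $L^1_{loc}$ convergence of $\sigma_k(D^2u_\varepsilon)$ via H\"older using $k\le p$, and uniqueness of Radon measures to pass from test functions to arbitrary Borel sets.
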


Next, we study the duality of Hessian measures under the Legendre transform. We first explain the motivation in the smooth setting. Let $u$ be a smooth uniformly convex function with $D^2u\ge \delta I>0$. Geometrically, if we view the gradient graph $(x,Du(x))$ as a graph over the vertical coordinates, then the Legendre transform $u^*$ of $u$ is the corresponding potential of this gradient graph in those coordinates. Namely, $(x,Du(x))=(Du^*(y),y).$ The coordinate changes $y=Du(x)$ and $x=Du^*(y)$ are inverse to each other, and $D^2u(x)= (D^2u^*(y))^{-1}$. Let $\lambda_i$ and $\mu_i$ denote the eigenvalues of $D^2u(x)$ and $D^2u^*(y)$, respectively. Then $\mu_i=\lambda_i^{-1}$.
Using the change of variables $\dd x= \det D^2 u^*(y) \, \dd y$, we obtain the duality relation
\[ \sigma_k(D^2u(x))\,\dd x= \sigma_k\left(\dfrac{1}{\mu_1},\cdots,\dfrac{1}{\mu_n}\right) \det D^2u^*(y)\,\dd y= \sigma_{n-k}(D^2u^*(y))\,\dd y. \]
By Colesanti-Hug \cite[Theorem 5.8]{Colesanti-Hug-MM}, this duality extends to the $k$-Hessian measures of general convex functions.
\begin{proposition}\label{PROP: Hessian dual}
    Let $u$ be a convex function defined on $B_1\subset \bb{R}^n$, and let $u^*$ be its Legendre transform. Then for any Borel sets $\alpha\subset B_1$ and $\beta \subset \partial u(B_1)$, we have
    \[  \mu_{k}[u](\alpha\times \beta) = \mu_{n-k}[u^*](\beta \times \alpha),\quad \text{for}\ k=0,1,\cdots,n.  \]
\end{proposition}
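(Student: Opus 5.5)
The plan is to derive the duality directly from the Steiner formula \eqref{eq: Steiner}, using the fact that the Legendre transform swaps the roles of $x$ and $p$ on the gradient graph. This follows the argument behind \cite[Theorem 5.8]{Colesanti-Hug-MM}.

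\emph{Step 1: the swap map.} Let $\iota(x,p)=(p,x)$. For a convex $u$ and its Legendre transform $u^*$, the Fenchel--Young equality gives
\[
p\in\partial u(x)\iff u(x)+u^*(p)=x\cdot p\iff x\in\partial u^*(p),
\]
for $x\in B_1$. Hence $\iota$ restricts to a bijection from $\Gamma_u$ onto the part of $\Gamma_{u^*}$ lying over $\partial u(B_1)$, with second components in $B_1$. Since $\iota$ is a linear isometry of $\mathbb{R}^n\times\mathbb{R}^n$, it maps Borel sets to Borel sets. In particular, for Borel $\alpha\subset B_1$ and $\beta\subset\partial u(B_1)$,
\[
\iota\bigl(\Gamma_u\cap(\alpha\times\beta)\bigr)=\Gamma_{u^*}\cap(\beta\times\alpha).
\]

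\emph{Step 2: scaling of parallel sets.} Fix a Borel set $A\subset\Gamma_u$ and $t>0$. Directly from the definition,
\[
P_t[u^*](\iota A)=\{p+tx:(x,p)\in A\}=t\,\{x+t^{-1}p:(x,p)\in A\}=t\,P_{1/t}[u](A).
\]
Since $\mathcal H^n$ is $n$-homogeneous, applying \eqref{eq: Steiner} to $u$ gives
\[
\mathcal H^n\bigl(P_t[u^*](\iota A)\bigr)=t^n\sum_{k=0}^n t^{-k}\mu_k[u](A)=\sum_{k=0}^n t^{\,n-k}\mu_k[u](A).
\]

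\emph{Step 3: comparing coefficients.} Applying \eqref{eq: Steiner} to $u^*$ on the Borel set $\iota A\subset\Gamma_{u^*}$ gives
\[
\mathcal H^n\bigl(P_t[u^*](\iota A)\bigr)=\sum_{j}t^j\mu_j[u^*](\iota A).
\]
Both sides are polynomials in $t$ that agree for all $t>0$, so their coefficients coincide: $\mu_{n-k}[u^*](\iota A)=\mu_k[u](A)$. Taking $A=\Gamma_u\cap(\alpha\times\beta)$ and using Step 1 yields the claim, since the measures are concentrated on the gradient graphs.

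\emph{Main obstacle: applying \eqref{eq: Steiner} to $u^*$.} The formula was stated for convex functions on a domain such as $B_1$. Here $u^*$ is a finite convex function on all of $\mathbb{R}^n$ (assuming $u$ bounded below on $B_1$; otherwise one first shrinks the ball), but $\beta$ may reach the boundary of $\partial u(B_1)$. I would handle this in three moves.
\begin{itemize}[leftmargin=*]
\item[(a)] The Steiner formula of \cite{Colesanti--formula} holds for finite convex functions on open convex sets, in particular on $\mathbb{R}^n$, and the measures $\mu_j[u^*]$ are local: the value on a Borel subset of $\Gamma_{u^*}$ depends only on that subset. This is immediate from \eqref{eq: Steiner}, whose left side only involves the points of $A$.
\item[(b)] To avoid finiteness issues with $\mathcal H^n$, first take $A$ with bounded projection. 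I would use $\alpha\Subset B_1$, so that $\partial u(\alpha)$ is bounded.
\item[(c)] Then exhaust a general $\alpha$ by such sets and pass to the limit using countable additivity of both sides.
\end{itemize}
The only remaining subtlety is to make sure $\mathcal H^n(P_t[u](A))$ is finite, so that subtracting and comparing polynomial coefficients is legitimate. This holds because, for $A$ with bounded projections, $P_t[u](A)$ is contained in a bounded set.
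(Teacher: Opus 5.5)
Your proposal is correct and follows essentially the same route as the paper: subgradient duality $p\in\partial u(x)\iff x\in\partial u^*(p)$ gives the parallel-set scaling identity $P_t[u](\alpha\times\beta)=t\,P_{1/t}[u^*](\beta\times\alpha)$, and comparing coefficients in the Steiner formula yields the duality. Your extra step of first taking $\alpha\Subset B_1$, so that every quantity is finite before you compare coefficients, and then exhausting by countable additivity, supplies a detail the paper passes over. Your caveat about $u$ being bounded below is unnecessary, since a convex function on $B_1$ has an affine minorant and therefore $u^*$ is finite on all of $\mathbb{R}^n$.
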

\begin{proof}
    From the subgradient duality of the Legendre transform, we have $p\in\partial u(x) \Longleftrightarrow x\in \partial u^*(p) $. Therefore, for any $t>0$, we obtain
    \begin{align*}
        P_{t}[u](\alpha\times \beta)&= \{ x+tp: x\in \alpha, p\in\beta, p\in \partial u(x) \}\\
        &= t\{ p+ x/t: p\in\beta, x\in \alpha,x\in \partial u^*(p) \}\\
        &= tP_{1/t}[u^*](\beta\times \alpha).
    \end{align*}
    Applying the Steiner formula \eqref{eq: Steiner}, we compute the expansion of its $\mathcal{H}^n$-measure to obtain
    \begin{align*}
    \sum_{k=0}^{n}t^{k} \mu_{k}[u](\alpha\times \beta) =t^n\sum_{k=0}t^{-k} \mu_{k}[u^*](\beta\times\alpha).
    \end{align*}
    Comparing the coefficients of the $t^k$ terms on both sides gives us desired duality.
\end{proof}

\section{Minimality of the gradient graph}\label{SEC: minimality}

In this section, we prove that minimiality assertions in Theorem \ref{thm: 3D main} and \ref{THM: minimality in high dim}. It suffices to consider the subcritical phase case. We use the notation introduced in Section \ref{Subsec: LY rotation}. Recall that
\[ U(x)= su(x) + c\dfrac{|x|^2}{2} \qquad \text{and}\qquad U^*(\bar x)= -s\bar u(\bar x) + c\dfrac{|\bar x|^2}{2}. \]
Notice that $U^*\in C^{1,1}$, it follows that $U^*$ is twice differentiable almost everywhere. Denote
\[ \mathcal{R}=\{\bar x\in \Omega: \bar U \text{ is twice differentiable at }\bar x_0\text{ and } \det D^2\bar U(\bar x_0)>0. \} \]
and
\[ \mathcal{S}=\{\bar x\in \Omega: \bar U \text{ is twice differentiable at }\bar x_0\text{ and } \det D^2\bar U(\bar x_0)=0. \} \]
We call $\mathcal{R}$ the \emph{regular set} because, for $\bar x\in\mathcal{R}$, we have $\bar u$ is twice differentiable at $\bar x$ with  $D^2\bar{u}(\bar x)< c/s= \tan(\pi/2-\beta)$. Thus, the Property (2) of the Lewy--Yuan rotation allows us to directly verify the equation for $\bar u$ at this point:
\begin{align*}
    F(D^2\bar u (\bar x)) =\sum_{i=1}^{n} \arctan \bar\lambda_i = \sum_{i=1}^{n}\arctan\lambda_i-\beta=\Theta-n\beta.
\end{align*}
Similarly, we call $\mathcal{S}$ the \emph{singular set} because, at the corresponding points in the original $x$-coordinates, $D^2u$ blows up.
\begin{proposition}\label{PROP: partial reg}
    The preimage of $\mathcal{S}$ in the $x$-coordinate has measure zero, that is $|DU^*(\mathcal{S})|$=0.
\end{proposition}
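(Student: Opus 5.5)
We will show that almost every point $x\in B_1$ is a point where $u$ is smooth nearby, and that $DU^*(\mathcal{S})$ contains no such point. This gives $|DU^*(\mathcal{S})|=0$.

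\emph{Step 1: local smoothness almost everywhere.} Since $u$ is semi-convex, Alexandrov's theorem gives a full-measure set $G\subset B_1$ of points $x_0$ at which $u$ has a second-order Taylor expansion
\[ u(x)=u(x_0)+Du(x_0)\cdot(x-x_0)+\tfrac12 (x-x_0)^TD^2u(x_0)(x-x_0)+o(|x-x_0|^2). \]
At such a point the viscosity equation holds classically, so $F(D^2u(x_0))=\Theta$. Consider the rescalings $u_r(z)=r^{-2}\bigl(u(x_0+rz)-u(x_0)-rDu(x_0)\cdot z\bigr)$. They converge uniformly on $B_1$ to the quadratic polynomial $\frac12 z^TD^2u(x_0)z$, and they are viscosity solutions of the same equation. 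The operator $F$ is smooth and uniformly elliptic near the matrix $D^2u(x_0)$. Hence Savin's small perturbation theorem \cite{Savin}, applied to $u_r-\frac12 z^TD^2u(x_0)z$ for $r$ small, gives $u_r\in C^{2,\alpha}(B_{1/2})$. Evans--Krylov and bootstrapping then make $u$ smooth in a neighborhood of $x_0$. Let $\mathcal{O}\subset B_1$ be the open set of points near which $u$ is smooth. We have shown $G\subset\mathcal{O}$, so $|B_1\setminus\mathcal{O}|=0$.

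\emph{Step 2: $DU^*(\mathcal{S})$ avoids $\mathcal{O}$.} Take $\bar x_0\in\mathcal{S}$ and set $x_0=DU^*(\bar x_0)$, so that $\bar x_0\in\partial U(x_0)$. Suppose, for contradiction, that $x_0\in\mathcal{O}$. Then $U=su+c|x|^2/2$ is smooth near $x_0$ with $D^2U\ge (c-s\tan\theta)I>0$; this is positive because $\beta<\pi/2-\theta$. The map $DU$ is therefore a smooth diffeomorphism from a neighborhood of $x_0$ onto a neighborhood of $\bar x_0$, with inverse $DU^*$. It follows that $D^2U^*(\bar x_0)=(D^2U(x_0))^{-1}$, which is nondegenerate, contradicting $\bar x_0\in\mathcal{S}$. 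Here we use that $DU^*$ is a genuine single-valued inverse of $DU$: $U^*\in C^{1,1}$ and $\partial U^*=(\partial U)^{-1}$. We also need the set where $U^*$ is twice differentiable to contain a full neighborhood of $\bar x_0$. This holds because $DU(\mathcal{O}')$ is open for any open $\mathcal{O}'\subset\mathcal{O}$, by invariance of domain or directly by the inverse function theorem.

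\emph{Conclusion.} Combining the two steps, $DU^*(\mathcal{S})\subset B_1\setminus\mathcal{O}$, which has measure zero. This proves the proposition.

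The main obstacle is Step 1, namely justifying Savin's theorem at Alexandrov points. Two points need care. First, the rescalings must remain viscosity solutions; this holds because the equation is invariant under subtracting affine functions and under this parabolic scaling. Second, the smallness condition in Savin's theorem must be met. It is, because the $o(|x-x_0|^2)$ term becomes an arbitrarily small $L^\infty$ error on $B_1$. The nonconvexity of the level set at subcritical phase plays no role here, since Savin's result only needs $F$ to be $C^2$ near the limiting Hessian.
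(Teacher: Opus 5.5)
Your proposal is correct and follows the paper's proof. The paper uses the same two steps: Alexandrov's theorem together with Savin's small perturbation theorem at twice-differentiability points, and then the observation that such points cannot lie in $DU^*(\mathcal{S})$ because $DU$ is a local diffeomorphism there; you spell out this last step in more detail than the paper's ``straightforward to check''. One small correction: Evans--Krylov is neither available (the operator is not concave at subcritical phase) nor needed, since once Savin gives $C^{2,\alpha}$, differentiating the equation and Schauder bootstrapping suffice, and in fact Step 2 only uses that $u$ is $C^2$ near $x_0$.
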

\begin{proof}
    In the $x$-coordinate, $u$ is a semi-convex viscosity solution to \eqref{eq: sLag}. By the Alexandrov theorem, $u$ is twice differentiable almost everywhere in $B_1$. Let $\mathcal Z$ denote the set of such twice differentiability points, then $|B_1\setminus\mathcal{Z}|=0.$ Fix any $x_0\in \mathcal{Z}$, there exists a quadratic polynomial $Q_{x_0}(x)$ such that $|u(x)-Q_{x_0}(x)|=o(|x-x_0|^2)$ as $x\to x_0$. In particular, $F(D^2Q_{x_0})=\Theta$. For $r>0$ small, to be chosen later, consider the rescaled function 
    \[ v(y)= \dfrac{u(x_0+ry)-Q_{x_0}(x_0+ry)}{r^2}\qquad \text{for}\ y\in B_1, \]
    then $v$ solves the equation
    \[ G(D^2v)= F(D^2v+D^2Q_{x_0})-\Theta=0. \]
    This fully nonlinear operator $G(M)=F(M+D^2Q_{x_0})-\Theta$ satisfies the hypotheses of Savin's small perturbation theorem \cite{Savin}: $G$ is uniformly elliptic for $|M|\leq 1$, $G(0)=0$ and $|D^2G|\leq C(n)$. Let $\delta>0$ be the small constant in Savin's theorem, we have $\|v\|_{L^{\infty}(B_1)}\leq o(r^2)/r^2 <\delta$ provided $r$ is sufficiently small. Savin's small perturbation theorem then implies that $v\in C^{2,\alpha}(B_{1/2})$, and hence $u\in C^{2,\alpha}(B_{r/2}(x_0))$. Consequently, $u$ is smooth near $x_0$.  It is then straightforward to check that $DU(\mathcal{Z})\subset \mathcal{R}$. Equivalently, $\mathcal{Z}\subset DU^*(\mathcal{R})$, and hence $DU^*(\mathcal{S})\subset B_1\setminus \mathcal{Z}$. This completes the proof.
\end{proof}

The rest of this subsection is devoted to proving that $\mathcal{S}$ has measure zero in the rotated $\bar x$-coordinates.

\subsection{3D case}\label{Subsec: 3D minimality}
A key lemma is that, under the $W^{2,1}$ assumption on $U$, the Hessian $D^2U^*$ has at least two zero eigenvalues for almost every point in $\mathcal{S}$ when $\mathcal{S}$ has positive Lebesgue measure.
\begin{lemma}\label{LEMMA 3D 2 blowup eigen}
    Suppose that $|\mathcal{S}|>0$ and $U\in W^{2,1}(B_1)$, then we have
    \[ \mathrm{rank}\, D^2U^*\leq 1 \quad a.e.\ in\  \mathcal{S}.  \]
\end{lemma}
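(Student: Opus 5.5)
The plan is to use the Hessian duality of Proposition \ref{PROP: Hessian dual} with $k=1$, $n=3$. This turns the $W^{2,1}$ information on $U$ into the vanishing of the second Hessian measure of $U^*$ on $\mathcal{S}$. Once $\sigma_2(D^2U^*)=0$ a.e. on $\mathcal{S}$, positive semidefiniteness forces the rank to be at most one.

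First, set $\alpha:=DU^*(\mathcal{S})\subset B_1$ and $\beta:=\mathcal{S}\subset\Omega=\partial U(B_1)$. Both are Borel up to null sets; since $\mathcal{S}$ is only measurable, I would replace it by a Borel subset of full measure. By Proposition \ref{PROP: partial reg}, $|\alpha|=0$. Since $U$ is convex and lies in $W^{2,1}(B_1)$, the preceding Proposition with $p=1$ gives
\[
\mu_1[U](\alpha\times\bb{R}^3)=\int_\alpha \Delta U\,\dd x=0 .
\]
As $\mu_1[U]$ is nonnegative, this yields $\mu_1[U](\alpha\times\beta)=0$. Proposition \ref{PROP: Hessian dual} with $k=1$ then gives $\mu_2[U^*](\beta\times\alpha)=0$.

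Second, I need to identify $\mu_2[U^*](\beta\times\alpha)$ with $\int_{\mathcal S}\sigma_2(D^2U^*)\,\dd\bar x$. Here $U^*\in C^{1,1}(\Omega)$, so $DU^*$ is single-valued and Lipschitz. For $\bar x\in\beta$ the pair $(\bar x,DU^*(\bar x))$ lies in $\beta\times\alpha$, because $\alpha$ was defined as the image $DU^*(\beta)$. Therefore $\mu_2[U^*](\beta\times\alpha)=\mu_2[U^*](\beta\times\bb{R}^3)$.

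For a $C^{1,1}$ convex function, the parallel map $\bar x\mapsto\bar x+tDU^*(\bar x)$ is Lipschitz and injective, by monotonicity of the gradient. The area formula then gives
\[
\mathcal{H}^3\bigl(P_t[U^*](\beta\times\bb{R}^3)\bigr)=\int_\beta\det(I+tD^2U^*)\,\dd\bar x .
\]
Here $D^2U^*$ is the a.e. (Alexandrov) Hessian, which agrees with the distributional one because $U^*\in W^{2,\infty}$. Comparing coefficients with the Steiner formula \eqref{eq: Steiner} gives $\mu_2[U^*](\beta\times\bb{R}^3)=\int_\beta\sigma_2(D^2U^*)\,\dd\bar x$. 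This is exactly the $W^{2,p}$ Proposition with $p=\infty$, and I would either cite that or run the area-formula argument directly.

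Conclusion: $\int_{\mathcal S}\sigma_2(D^2U^*)\,\dd\bar x=0$. Since $D^2U^*\ge0$ at every twice-differentiability point, $\sigma_2(D^2U^*)$ is a sum of nonnegative products of eigenvalues, so it vanishes a.e. on $\mathcal{S}$. A nonnegative symmetric $3\times3$ matrix with $\sigma_2=0$ has at most one nonzero eigenvalue, hence $\mathrm{rank}\,D^2U^*\le1$ a.e. on $\mathcal S$.

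The main obstacle is measure-theoretic bookkeeping rather than analysis. Two points need care. First, the sets must be Borel, since $DU^*(\mathcal S)$ is the Lipschitz image of a measurable set and is only known to be null; I would enlarge it to a Borel null set containing it. Second, the product-set formulation of Proposition \ref{PROP: Hessian dual} must be matched with the $E\times\bb{R}^n$ formulation of the $W^{2,p}$ Proposition, which is exactly what the first and second steps do. The hypothesis $|\mathcal S|>0$ is not really needed; the statement is vacuous otherwise.
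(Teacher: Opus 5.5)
Your proposal is correct and is essentially the paper's proof: apply Proposition~\ref{PROP: Hessian dual} with $k=1$ to the pair $(DU^*(\cdot),\cdot)$, make the $U$-side vanish using $U\in W^{2,1}$ together with $|DU^*(\mathcal S)|=0$ from Proposition~\ref{PROP: partial reg}, and read off $\int\sigma_2(D^2U^*)\,\dd\bar x=0$ on the $C^{1,1}$ side. The only difference is cosmetic: the paper works on $\mathcal S_2=\{\mathrm{rank}\,D^2U^*=2\}$, where $\sigma_2>0$, instead of all of $\mathcal S$. Your step $\mu_1[U](\alpha\times\beta)\le\mu_1[U](\alpha\times\mathbb{R}^3)=\int_\alpha\Delta U\,\dd x$ is also exactly what justifies the paper's displayed first equality.
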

\begin{proof}
     Since $\det D^2U^*=0$ on $\mathcal{S}$, it remains to rule out the possibility that $D^2U^*$ has rank two. Let
     \[ \mathcal{S}_2:= \{ \bar x \in \mathcal{S}: \mathrm{rank}\, D^2U^*(\bar x)=2  \}. \]
     On $\mathcal{S}_2$, the matrix $D^2U^*$ has two positive eigenvalues and one zero eigenvalue. In particular, $\sigma_{2}(D^2U^*)>0$. Applying Proposition \ref{PROP: Hessian dual} to $U$ and $U^*$ with $\alpha=DU^*(\mathcal{S}_2)$ and $\beta = \mathcal{S}_2$, we obtain
     \begin{equation}\label{eq: 3D hess dual}
         \int_{DU^*(\mathcal{S}_2)}\Delta U\,\dd x=\mu_{1}[U](DU^*(\mathcal{S}_2)\times \mathcal{S}_2) = \mu_2[U^*](\mathcal{S}_2\times DU^*(\mathcal{S}_2))= \int_{\mathcal{S}_2}\sigma_2(D^2U^*) \,\dd \bar x 
     \end{equation}
     Here we have used the facts that $U\in W^{2,1}$ and $U^*\in C^{1,1}$, which ensure that their Hessian measures in \eqref{eq: 3D hess dual} reduce to their classical forms. The LHS vanishes because $DU^*(\mathcal{S}_2)$ has measure zero by Proposition \ref{PROP: partial reg}. Therefore, the positivity of $\sigma_2(D^2U^*)$ implies that $|\mathcal{S}_2|=0$. The proof is complete.
\end{proof}

Lemma \ref{LEMMA 3D 2 blowup eigen} shows that, if $|\mathcal{S}|>0$, then whenever $D^2u$ blows up, it must have at least two blow-up eigenvalues. However, at three dimensional subcritical phase, $D^2u$ is allowed to have at most one blow up eigenvalue. This gives a contradiction. The rigorous proof is more convenient in the rotated $\bar x$-coordinate.

\bigskip
\emph{Proof of the minimality in Theorem \ref{thm: 3D main}.} Assume that $|\mathcal{S}|>0$. Since $U^*\in C^{1,1}$, it is twice differentiable almost everywhere. Hence we can choose a point $\bar x_0\in\mathcal{S}$ at which $U^*$ is twice differentiable. By Lemma \ref{LEMMA 3D 2 blowup eigen}, we may also assume that $\mathrm{rank}\, D^2U^*(\bar x_0)\leq 1$. Thus $D^2U^*(\bar x_0)$ has at least two zero eigenvalues. Using the expression in  \eqref{eq: rotated potential}, we see that $D^2\bar u(\bar x_0)$ has at least two maximal eigenvalues $\tan(\pi/2-\beta)$. Together with the lower bound for $D^2\bar u$, this contradicts the fact that $\bar u$ is  a supersolution. Precisely,
\[ \Theta-3\beta \ge F(D^2\bar u(\bar x_0)) \ge  \left( \dfrac{\pi}{2} -\beta \right) + \left( \dfrac{\pi}{2} -\beta \right) + \arctan \bar \lambda_3 \ge \pi-\theta-3\beta. \]
This is impossible, since $\Theta , \theta <\pi/2$. Therefore, $|\mathcal{S}|=0$. It follow that $\bar u$ is a $C^{1,1}$ strong solution to $F(D^2\bar u)= \Theta-3\beta$ on $\Omega$. The area-minimizing property of the gradient graph then follows from the calibration argument \cite[Theorem 4.2]{Harvey-Lawson}. \hfill\qed

\subsection{General case}\label{Subsec: general minimality}
The proof of the higher dimensional case is similar.
\begin{lemma}\label{LEMMA: nD Hess dual}
    Suppose that $|\mathcal{S}|>0$ and $U\in W^{2,k}(B_1)$ for some $k\in \{ 1, 2,\dots, n-2\}$, then we have
    \[ \mathrm{rank}\, D^2U^*\leq n-k-1 \quad a.e.\ in\  \mathcal{S}.\]
\end{lemma}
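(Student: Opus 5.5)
We mimic the proof of Lemma \ref{LEMMA 3D 2 blowup eigen}, replacing $\sigma_2$ by $\sigma_{n-k}$. Since $\det D^2U^*=0$ on $\mathcal{S}$, we already know $\mathrm{rank}\, D^2U^*\le n-1$ there. So it suffices to show that the set
\[ \mathcal{S}_{\ge n-k}:=\{\bar x\in\mathcal{S}:\ n-k\le \mathrm{rank}\, D^2U^*(\bar x)\le n-1\} \]
has measure zero. On this set $D^2U^*\ge 0$ has at least $n-k$ positive eigenvalues, so $\sigma_{n-k}(D^2U^*)>0$ pointwise. Here the convexity of $U^*$ is used: all the terms in $\sigma_{n-k}$ are nonnegative, and at least one product of $n-k$ positive eigenvalues appears.

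We then apply Proposition \ref{PROP: Hessian dual} to $U$ and $U^*$ with $\alpha=DU^*(\mathcal{S}_{\ge n-k})$ and $\beta=\mathcal{S}_{\ge n-k}$. This gives
\[ \mu_k[U]\bigl(DU^*(\mathcal{S}_{\ge n-k})\times \mathcal{S}_{\ge n-k}\bigr)=\mu_{n-k}[U^*]\bigl(\mathcal{S}_{\ge n-k}\times DU^*(\mathcal{S}_{\ge n-k})\bigr). \]
For the left-hand side, $U\in W^{2,k}$ is convex, so the proposition on $W^{2,p}$ Hessian measures (with $p=k$, hence $[p]=k$) identifies $\mu_k[U](E\times\bb{R}^n)$ with $\int_E\sigma_k(D^2U)\,\dd x$. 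The measure of $DU^*(\mathcal{S}_{\ge n-k})\times \mathcal{S}_{\ge n-k}$ is bounded above by $\mu_k[U](DU^*(\mathcal{S}_{\ge n-k})\times\bb{R}^n)$, and this vanishes by Proposition \ref{PROP: partial reg}. For the right-hand side, $U^*\in C^{1,1}$ lies in $W^{2,p}$ for every $p$, so its Hessian measure is classical. Because $U^*$ is differentiable, its gradient graph is single-valued, so the product set carries the full measure over $\mathcal{S}_{\ge n-k}$. Hence the right-hand side equals $\int_{\mathcal{S}_{\ge n-k}}\sigma_{n-k}(D^2U^*)\,\dd\bar x$.

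This integral therefore vanishes. Since its integrand is positive on $\mathcal{S}_{\ge n-k}$, we get $|\mathcal{S}_{\ge n-k}|=0$, which proves the lemma.

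The only delicate point is the bookkeeping of the measures on product sets. On the $U$ side one needs a monotonicity or restriction argument, since $\alpha\times\beta$ is a subset of $\alpha\times\bb{R}^n$. On the $U^*$ side one needs that $\Gamma_{U^*}\cap(\beta\times\alpha)$ projects onto all of $\beta$, which holds because $DU^*(\beta)\subset\alpha$ by definition of $\alpha$. Measurability of $DU^*(\mathcal{S}_{\ge n-k})$ should be handled as in the 3D case; if needed, we replace it by a Borel null superset, using the monotonicity just noted.
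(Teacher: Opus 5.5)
Your proof is correct and follows the paper's argument: Hessian duality (Proposition \ref{PROP: Hessian dual}), together with $|DU^*(\mathcal{S})|=0$ from Proposition \ref{PROP: partial reg} and the positivity of an elementary symmetric function of $D^2U^*$ on the high-rank part of $\mathcal{S}$. The only difference is bookkeeping. The paper splits by exact rank $l\in\{n-k,\dots,n-1\}$ and pairs $\sigma_l(D^2U^*)$ with $\sigma_{n-l}(D^2U)$, where $n-l\le k$. You instead use the single pair $\sigma_{n-k}(D^2U^*)$, $\sigma_k(D^2U)$ on the whole set $\{\mathrm{rank}\,D^2U^*\ge n-k\}$, which works equally well because $\sigma_{n-k}$ of a nonnegative matrix of rank at least $n-k$ is positive.
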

\begin{proof}
    For each $l=n-k, n-k+1,\dots, n-1$, set
    \[  \mathcal{S}_{l}= \{ \bar x\in\mathcal{S}: \mathrm{rank} \, D^2U^*(\bar x) = l \}.  \]
    We show that $|\mathcal{S}_l|=0$ for each $l$.
    
    On $\mathcal{S}_l$, we have $\sigma_{l}(D^2U^*)>0$. Applying Proposition \ref{PROP: Hessian dual}, to $U$ and $U^*$ with $\alpha=DU^*(\mathcal{S}_l)$ and $\beta=\mathcal{S}_l$, we obtain
    \begin{equation*}
         \int_{DU^*(\mathcal{S}_l)}\sigma_{n-l}(D^2U)\,\dd x=\mu_{n-l}[U](DU^*(\mathcal{S}_l)\times \mathcal{S}_l) = \mu_l[U^*](\mathcal{S}_l\times DU^*(\mathcal{S}_l))= \int_{\mathcal{S}_l}\sigma_l(D^2U^*) \,\dd \bar x.
     \end{equation*}
     The LHS vanishes since $U\in W^{2,k}$ with $ k\ge n-l$ and $|DU^*(\mathcal{S}_l)|=0$, thus $|\mathcal{S}_l|=0$ follows.
\end{proof}

\bigskip
\emph{Proof of Theorem \ref{THM: minimality in high dim}.} Assume that $|\mathcal{S}|>0$.  By Lemma \ref{LEMMA: nD Hess dual}, we may choose a point $\bar x_0 \in \mathcal{S}$ such that $U^*$ is twice differentiable at $\bar x_0$ and $\mathrm{rank}\,D^2U^*(\bar x_0)\leq n-k-1$. Thus, $D^2\bar u(\bar x_0)$ has at least $k+1$ maximal eigenvalues $\tan(\pi/2-\beta)$. Since $\bar u$ is a supersolution, we obtain
\begin{equation}\label{eq: contradict}
    \Theta-n\beta \ge F(D^2\bar u(\bar x_0)) \ge  (k+1)\left( \dfrac{\pi}{2}-\beta\right) - (n-k-1)(\theta+\beta).
\end{equation}
Notice that $\Theta \in I_k $, then $\Theta\leq k\pi-(n-2)\pi/2$. Simplifying \eqref{eq: contradict}, we get
\[ (n-k-1)\theta \ge (k+1)\dfrac{\pi}{2} - \Theta \ge (n-k-1)\dfrac{\pi}{2}, \]
which is impossible. Therefore, $|\mathcal{S}|=0$, and the area-minimizing property of the gradient graph follows. \hfill\qed

\section{Proof of Theorem \ref{thm: 3D main}  and Theorem \ref{THM: high dim reg}} \label{SEC: Pf of THMs}
In this section, we finish the proofs of Theorem \ref{thm: 3D main} and Theorem \ref{THM: high dim reg}.  We have already known that the gradient graph $(x,\partial u(x))$ is area-minimizing. In fact, once minimality is established, the smoothness of the gradient graph follows from the semi-convexity condition in Theorem \ref{THM: minimality in high dim}. More precisely, $u+\tan\theta |x|^2/2 $ is convex for some fixed constant $\theta\in(0,\pi/2)$ when $n=3$ or $4$, while $u+\tan(\pi/6)|x|^2/2$ is convex when $n\geq 5$.

\subsection{Smoothness of the rotated potential} We rotate the gradient graph of  $u$ by the angle $\beta$, where
\[ \beta= \begin{cases}
    \text{any fixed angle in }(0,\pi/2-\theta),\quad &\text{when }n=3\ \text{or}\ 4; \\
    \pi/6 &\text{when }n\ge 5.
\end{cases} \]
Then we obtain a $C^{1,1}$ potential $\bar u$ which is a viscosity solution of the corresponding special Lagrangian equation in the rotated $\bar x$-coordinates. Moreover, in dimensions $n\ge 5$, we have $\|D^2\bar u\|_{L^{\infty}}\leq \sqrt{3}$.

\begin{proposition}
    Let $u$ be a $C^{1,1}$ viscosity solution to the special Lagrangian equation \eqref{eq: sLag} on a domain $\Omega\subset \bb{R}^n$. Then

    (1) $u$ is smooth in $\Omega$ when $n=3$ or $4$;

    (2) In general dimensions, if $\|D^2u\|_{L^{\infty}(\Omega)}\leq \sqrt{3}$, then $u$ is smooth.
\end{proposition}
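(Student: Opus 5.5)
The plan is the standard blow-up and tangent-cone argument. A $C^{1,1}$ viscosity solution is twice differentiable almost everywhere, and by the argument of Proposition \ref{PROP: partial reg} (Alexandrov plus Savin's small perturbation theorem) it is a strong solution a.e. Since $D^2u$ is bounded, the calibration argument \cite[Theorem 4.2]{Harvey-Lawson} shows that the gradient graph $\Gamma_u=(x,Du(x))$ is a Lipschitz area-minimizing special Lagrangian graph. The strategy is to show that every tangent cone of $\Gamma_u$ is a flat multiplicity-one plane. Allard's regularity theorem then gives that $\Gamma_u$ is $C^{1,\alpha}$ near each point. Because the tangent plane is graphical (the slope is bounded), $Du\in C^{\alpha}$, so $u\in C^{2,\alpha}$. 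Once the equation is uniformly elliptic and $u\in C^{2,\alpha}$, Schauder theory bootstraps this to smoothness. (Alternatively, once $D^2u$ is continuous, Savin's theorem applies directly near every point.)

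For the blow-up step, fix $x_0\in\Omega$ and set
\[ u_r(y)=\frac{u(x_0+ry)-u(x_0)-Du(x_0)\cdot ry}{r^2}. \]
These functions satisfy $\|D^2u_r\|_{L^\infty}\le \|D^2u\|_{L^\infty}$ uniformly, so along a subsequence $u_r\to u_0$ in $C^{1,\gamma}_{loc}(\bb{R}^n)$. By stability of viscosity solutions, $u_0$ is a $C^{1,1}$ viscosity solution on $\bb{R}^n$ with the same Hessian bound. By the monotonicity formula and compactness of area-minimizing currents, the gradient graphs $\Gamma_{u_r}$ converge as currents to a tangent cone of $\Gamma_u$ at $(x_0,Du(x_0))$. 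This cone is the graph $\Gamma_{u_0}$, it is area-minimizing, and it is special Lagrangian. Being a cone forces $u_0$ to be homogeneous of degree two, so $u_0$ is smooth away from the origin precisely where its cone is regular.

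The main obstacle is the rigidity of this graphical special Lagrangian cone with bounded Hessian, and I would handle it by citing the known results. For $n=3$, every such cone is flat by \cite{Fischer-Colbrie, Han-Nadi-Yuan}. For $n=4$, flatness follows from \cite{NV-4D-sLag}. In general dimensions with $|D^2u_0|\le\sqrt3$, every eigenvalue satisfies $|\arctan\lambda_i|\le\pi/3$. After a rotation by $\pi/6$, the potential becomes $\tan(\pi/6)$-convex, or convex after the appropriate shift, and the Liouville theorems of Warren--Yuan \cite{Warren-Yuan-Liouville} and Ding \cite{Ding-Qi} then give that $u_0$ is quadratic. These citations may require first verifying that $u_0$ is an entire solution in the appropriate sense, for instance smooth or a.e. strong with the same phase. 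In either case the cone is a plane, and it has multiplicity one since it is a graph. Allard's theorem then applies at every point $x_0$, which completes the argument.
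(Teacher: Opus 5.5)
Your argument is correct and follows the paper's outline. You blow up at $x_0$, use monotonicity to see that the limit $u_0$ is a $C^{1,1}$ solution, homogeneous of degree two, whose gradient graph is a special Lagrangian cone, and you cite the same rigidity results to make that cone flat. The routes differ in how the cone is identified and in the final $\varepsilon$-regularity step.

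\emph{Identifying the cone.} The paper upgrades the $C^{1,\alpha}$ convergence of the blow-ups to $W^{2,p}_{loc}$ convergence via the $W^{2,\delta}$ estimate before invoking monotonicity. You instead use weak convergence of the graph currents, which follows from uniform convergence of $Du_r$ with uniformly bounded Lipschitz constants, together with compactness of minimizing currents.

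\emph{The $\varepsilon$-regularity step.} The paper applies Savin's small perturbation theorem to $v_k=u_k-u_\infty$. These are $L^\infty$-small viscosity solutions of $F(D^2v+D^2u_\infty)-F(D^2u_\infty)=0$. You instead apply Allard's theorem, since a flat multiplicity-one tangent cone forces density one at $(x_0,Du(x_0))$.

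\emph{What each route buys.} The paper's route stays within viscosity-solution theory and needs only $L^\infty$-closeness to a quadratic at one scale. Yours stays within geometric measure theory and yields the regularity of $\Gamma_u$ directly. It avoids both the $W^{2,\delta}$ estimate and Savin's theorem, at the cost of an easy extra step: converting a $C^{1,\alpha}$ submanifold with uniformly non-vertical tangent planes into a $C^{2,\alpha}$ potential. Your normalization, which subtracts the tangent plane, is the right one. The paper's $k^2u(x_0+x/k)$ tacitly assumes $u(x_0)=0$ and $Du(x_0)=0$; otherwise the claimed uniform $C^{1,1}$ bound on $u_k$ fails.

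Two small corrections.

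\emph{The regularity conclusion.} Allard plus the slope bound gives $Du\in C^{1,\alpha}$, via the inverse function theorem applied to the projection of $\Gamma_u$ onto the $x$-plane. That is what yields $u\in C^{2,\alpha}$; $Du\in C^{\alpha}$ would not suffice.

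\emph{The case $|D^2u_0|\le\sqrt3$.} Your rotation detour is unnecessary and slightly backwards. Rotating upward by $\pi/6$ sends angles in $[-\pi/3,\pi/3]$ to $[-\pi/6,\pi/2]$. At the borderline angle $\pi/3$ the rotated graph therefore acquires vertical directions and need not be the graph of a function. In the paper the implication runs the other way: $\tan(\pi/6)$-convexity becomes $|D^2\bar u|\le\sqrt3$ after a \emph{downward} rotation by $\pi/6$. The eigenvalue-constraint Liouville theorem of Warren--Yuan and Ding is then applied directly under $|D^2u_0|\le\sqrt3$, and you should do the same.
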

\begin{proof}
    Denote $K= \|D^2u\|_{L^{\infty}}$. For any fixed $x_0\in\Omega$ with $B_{\rho}(x_0)\in \Omega$ , and any $k\in \bb{N}^{+} $, consider the blow up
    \[ u_{k}(x)= k^2 u \left(x_0 + \dfrac{x}{k} \right),\quad \text{for }x\in B_{k\rho}. \]
    For any $R>0$, we see that $\{u_k\}$ is a family of $C^{1,1}$ viscosity solutions to \eqref{eq: sLag}, with $\|u_k\|_{C^{1,1}(B_{R})}\leq C(K,R)$. Then, up to a subsequence, there is a function $u_R\in C^{1,1}(B_R)$ such that $u_k\to u_R$ in $C^{1,\alpha}(B_R)$ as $k\to\infty$ for any $\alpha\in (0,1)$ and $|D^2u_R|\leq K$. Since the uniformly convergence preserves viscosity solutions, we know that $u_R$ is also a viscosity solution to \eqref{eq: sLag}. Now the \eqref{eq: sLag} is uniformly elliptic, by the $W^{2,\delta}$ estimate \cite[Proposition 7.4]{CC-book}, we have  
    \[  \| D^2u_k - D^2u_R \|_{L^{\delta}(B_{R/2})} \leq C(K,R)\| u_k-u_{R} \|_{L^{\infty}(B_{R})} \to 0 ,\quad \text{as}\ k\to \infty.  \]
    Note that $|D^2u_k|,|D^2u_{R}|\leq K$, then for any $p>0$, we have
    \[ \| D^2u_k - D^2u_R \|_{L^{p}(B_{R/2})}\to 0 ,\quad \text{as}\ k\to \infty. \]
    By the standard diagonalizing process, up to a further subsequence, there exists $u_{\infty}\in C^{1,1}(\bb{R}^n)$ such that $u_k\to u_{\infty}\in W^{2,p}_{loc}(\bb{R}^n)$ as $k\to\infty$, $|D^2u_{\infty}|\leq K$, and $u_{\infty}$ is a viscosity solution to \eqref{eq: sLag} on $\bb{R}^n$. Since $(x, Du(x))$ is a minimal submanifold in $\bb{R}^n\times \bb{R}^n$, by the monotinicity formula, $(x,Du_{\infty}(x))$ is a cone, that is $u_{\infty}$ is homogeneous of order 2. When the dimension $n=3$ or $4$, $u_{\infty}$ is a quadratic polynomial by the rigidity of graphical special Lagrangian cones, by \cite{Fischer-Colbrie, Han-Nadi-Yuan,NV-4D-sLag}. If $|D^2u|\leq K=\sqrt{3}$, the eigenvalue constraint \cite{Warren-Yuan-Liouville, Ding-Qi} holds, we also conclude that $u_{\infty}$ is a quadratic polynomial. 

    For sufficiently large $k$, we know that each $v_k=u_k-u_\infty$ is a viscosity solution to
    \[ G(D^2v_k)= F(D^2v_k + D^2u_{\infty}) - F(D^2u_{\infty}) =0 \quad \text{on}\ B_1.  \]
    Similar to the proof of Proposition \ref{PROP: partial reg}, the operator $G$ satisfies the hypotheses of Savin's small perturbation theorem \cite{Savin}. Let $\delta>0$ be the universal constant in Savin's theorem, we have $\|v_k\|_{L^{\infty}(B_1)} = \|u_k-u_{\infty}\|_{L^{\infty}(B_1)} \leq \delta$ for sufficiently large $k$. Therefore $v_k\in C^{2,\alpha}(B_{1/2})$ and hence smooth in $B_{1/2}$. This implies that $u$ is smooth in $B_{1/2k}(x_0)$. Since $x_0$ is arbitrary, the smoothness of $u$ follows. 
\end{proof}

From this proposition, the rotated potential $\bar u$ is smooth, and also implies the smoothness of the gradient graph.

\subsection{Regularity of the original potential}
The following Lemma, due to Bhattacharya--Shankar \cite[Theorem 4.1]{AB-RS-ARMA}, allows us to pass the smoothness of the rotated potential back to the original $C^{1,1/3+}$ potential.

\begin{lemma}\label{LEMMA: BS lemma}
    Let $U$ be a $C^{1,\alpha}$ strictly convex function defined on $B_1$ for some $\alpha>1/3$. If its Legendre transform $U^*$ is smooth in $DU(B_1)$, then $D^2U^*$ is nondegenerate, that is, $\det D^2U^*>0$ in $DU(B_1)$.
\end{lemma}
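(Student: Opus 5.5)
We argue by contradiction, comparing growth rates on the two sides of the Legendre transform. Suppose $\det D^2U^*(\bar x_0)=0$ at some $\bar x_0=DU(x_0)\in DU(B_1)$. After subtracting the affine part (replace $U^*$ by $U^*(\bar x)-U^*(\bar x_0)-x_0\cdot(\bar x-\bar x_0)$ and translate), we may assume $\bar x_0=0$, $x_0=0$, $U^*(0)=0$, $DU^*(0)=0$. Correspondingly $U(0)=0$ and $DU(0)=0$. Rotating coordinates, let $e_n$ be a unit vector with $D^2U^*(0)e_n=0$.

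\emph{Upper bound on $U^*$ along $e_n$.} We expand $U^*(te_n)$ by Taylor's formula, which is available since $U^*$ is smooth. The quadratic term vanishes, so $U^*(te_n)=a_3t^3+O(t^4)$. Convexity of $U^*$ forces $a_3=0$: otherwise $t\mapsto U^*(te_n)$ would change sign near $0$, while it is nonnegative. Hence $U^*(te_n)\le Ct^4$. Moreover, the same argument applied in a small cone of directions near $e_n$ gives $U^*(\bar x)\le C(|\bar x'|^2+|\bar x_n|^4)$ near the origin, where $\bar x=(\bar x',\bar x_n)$. This step needs care. We use $D^2U^*\ge0$ and the smoothness of $U^*$: since $D^2U^*(0)e_n=0$ and $D^2U^*\ge0$ in a neighborhood, the cross terms are controlled, giving $U^*(\bar x',\bar x_n)\le C|\bar x'|^2+C|\bar x_n|^4$. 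This follows by a Taylor expansion together with the inequality $|b\,\bar x'\bar x_n^2|\le |\bar x'|^2+b^2\bar x_n^4$.

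\emph{Transfer to a lower bound for $U$.} By the Legendre duality $U(x)=\sup_{\bar x}\bigl(x\cdot\bar x-U^*(\bar x)\bigr)$, the upper bound on $U^*$ yields a lower bound on $U$. We take $x=se_n$ and test with $\bar x=\tau e_n$:
\[ U(se_n)\ \ge\ s\tau - C\tau^4 \quad\text{for all small }\tau. \]
Optimizing with $\tau\sim s^{1/3}$ gives $U(se_n)\ge c\,s^{4/3}$ for small $s>0$. Since $U\in C^{1,\alpha}$ with $U(0)=0$ and $DU(0)=0$, we also have $U(se_n)\le Cs^{1+\alpha}$. For $\alpha>1/3$ this contradicts the lower bound as $s\to0$. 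Hence $\det D^2U^*>0$ at every point of $DU(B_1)$.

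The main obstacle is the quartic upper bound, namely ruling out a cubic term. Only the one-dimensional bound along the kernel direction is actually needed, and the cubic term is excluded by convexity via sign considerations. I must also check that the test point $\tau e_n$ lies in the domain $DU(B_1)$ for small $\tau$, which holds since that set is open around $0$. Strict convexity ensures the Legendre transform is well defined and that the correspondence $x\leftrightarrow \bar x$ is a bijection, so the duality used above is justified.
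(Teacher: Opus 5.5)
Your proof is correct and uses the same mechanism as the paper: convexity of $U^*$ kills the cubic Taylor term along a kernel direction $e$ of $D^2U^*(\bar x_0)$, so $U^*$ is flat to fourth order there, which is incompatible with $DU\in C^{\alpha}$ for $\alpha>1/3$. The only difference is where you make the comparison. You compare potentials via Fenchel--Young, getting $c\,s^{4/3}$ from below against the $C^{1,\alpha}$ bound $Cs^{1+\alpha}$, and this needs only $D^3U^*(\bar x_0)[e,e,e]=0$. The paper instead shows $DU^*(\bar x_0+te)-DU^*(\bar x_0)=O(t^3)$ using $D^3U^*(\bar x_0)[e,e,w]=0$ for all $w$, then composes with the H\"older inverse $DU$ to get $|t|\le C|t|^{3\alpha}$.
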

\begin{proof}
    Suppose, to the contrary, that $\det D^2U^*=0$ at some point $\bar x_0\in DU(B_1)$. Then there exists a direction $e\in \bb{S}^{n-1}$ such that $e^{T}D^2U^*(\bar x_0) e=0$. Consider the smooth function $g(\bar x)= e^{T}D^2U^*(\bar x) e$ near $\bar x_0$.  Since $U^*$ is convex, we have $g\ge0$. Thus $g$ attains its local minimum at $\bar x_0$, it follows that $Dg(\bar x_0)=0$. This yields 
    \[ D^3U^*(\bar x_0)[e,e,w]=0 \quad \text{for any direction }w\in\mathbb{S}^{n-1}. \]
    Now, consider the expansion of $DU^*$ along the line $\bar x_0+te$, i.e., consider the map $h(t)=DU^*(\bar x_0+te)-DU^*(\bar x_0)$, then $h$ is smooth and $h(0)= h'(0)=h''(0)=0$. By Taylor's expansion of $h$, we obtain $h(t)=O(t^3)$. 

    On the other hand, $DU^*$ has  inverse $DU\in C^{\alpha}$ for some $\alpha>1/3$, we conclude that
    \begin{align*}
        |t|= | DU(DU^*(\bar x_0+te)) - DU(DU^*(\bar x_0))|\leq C|h(t)|^\alpha \leq C|t|^{3\alpha}.
    \end{align*}
    Since $3\alpha>1$, this gives a contradiction by sending $t\to 0$.
\end{proof}

\bigskip
\noindent\emph{Proof of the remaining part of Theorem \ref{thm: 3D main} and Theorem \ref{THM: high dim reg}.} Now assume that $u\in C^{1,\alpha}$ for some $\alpha>1/3$. Applying Lemma \ref{LEMMA: BS lemma} to
\[ U(x)=su(x)+c\dfrac{|x|^2}{2}\in C^{1,\alpha}\qquad \text{and} \qquad U^*(\bar x)=-s\bar u(\bar x) + c\dfrac{|\bar x|^2}{2} \in C^{\infty}, \]
we obtain that $D^2U^*$ is nondegenerate in $DU(B_1)$. For any $x_0\in B_1$, at the corresponding point $\bar x_0$ in the rotated coordinate, we have $\det D^2U^*(\bar x_0) >0 $. By the inverse function theorem, $DU$, as the inverse of $DU^*$, is smooth near $x_0$. This implies the smoothness of the original potential $u$. \hfill\qed

\bigskip
\noindent
{\bf Declaration on the use of AI.} During the preparation of this manuscript, the author used ChatGPT 5.6 Sol to help make parts of the proofs more rigorous. In particular, ChatGPT pointed the author to a series of works by Colesanti and his collaborators \cite{Colesanti--formula, Colesanti-Salani, Colesanti-Hug-MM} that were previously unknown to the author, and these references provided a rigorous framework in which some of the ideas in the paper could be formulated. The mathematical problem and the overall proof strategy were developed by the author. ChatGPT was also used for language polishing to improve the readability of the manuscript. The author has verified all mathematical statements and arguments and takes full responsibility for the manuscript.

\bibliographystyle{amsalpha}
\bibliography{ref}
\end{document}